\newtheorem{theorem}{Theorem}
\newtheorem*{weisz}{Theorem A}
\newtheorem*{weisz2}{Theorem B}
\newtheorem*{weisz3}{Theorem C}
\newtheorem*{est}{Theorem MTK}
\begin{document}
\title[]{ Conjugate Transforms on Dyadic Group}
\author{ Ushangi Goginava and Aleksandre Saatashvili}
\address{I. Vekua Institute of Applied Mathematics and Faculty of Exact and
Natural Sciences of I. Javakhishvili Tbilisi State University, Tbilisi 0186,
2 University str., Georgia}
\email{zazagoginava@gmail.com}
\address{Aleksandre Saatashvili, Massachusetts Institute of Technology, 77
Massachusetts Ave, Cambridge, MA 02139, United States}
\email{saata@mit.edu}
\thanks{.}

\begin{abstract}
In this paper we study the properties of the Lebesgue constant of the
conjugate transforms. For conjugate Fejér means we will find necessary and
sufficient condition on $t$ for which the estimation $E\left\vert \widetilde{%
\sigma }_{n}^{\left( t\right) }f\right\vert \lesssim E\left\vert
f\right\vert $ holds . We also prove that for dyadic irrational $t$, $L\log
L $ is maximal Orlicz space for which the estimation $E\left\vert \widetilde{%
\sigma }_{n}^{\left( t\right) }f\right\vert \lesssim 1+E\left( \left\vert
f\right\vert \log ^{+}\left\vert f\right\vert \right) $ is valid.
\end{abstract}

\maketitle

\bigskip \footnotetext{%
2010 Mathematics Subject Classification. 42C10, 60G42.
\par
Key words and phrases: Conjugate Walsh transform, Martingale transform,
convergence in norm.}

\section{Dyadic Hardy spaces and conjugate transforms}

Let $\mathbb{P}$ denote the set of positive integers, $\mathbb{N}\mathbf{:=}%
\mathbb{P}\mathbf{\cup \{}0\mathbf{\},}$ the set of all integers by$\,\,%
\mathbb{Z}$ and the set of dyadic rational numbers in the unit interval $%
\mathbb{I}:=[0,1)$ by $\mathbb{Q}$. In particular, each element of $\mathbb{Q%
}$ has the form $\frac{p}{2^{n}}$ for some $p,n\in \mathbb{N},\,\,\,0\leq
p\leq 2^{n}$.

Denote by $Z_{2}$ the discrete cyclic group of order 2, that is $%
Z_{2}=\{0,1\},$ where the group operation is the modulo 2 addition and every
subset is open. The Haar measure on $Z_{2}$ is given such that the measure
of a singleton is 1/2. Let $G$ be the complete direct product of the
countable infinite copies of the compact groups $Z_{2}.$ The elements of $G$
are of the form $x=\left( x_{0},x_{1},...,x_{k},...\right) $ with $x_{k}\in
\{0,1\}\left( k\in \mathbb{N}\right) .$ The group operation on $G$ is the
coordinate-wise addition, the measure (denote\thinspace $\,$by$\,\,\mu $)
and the topology are the product measure and topology. The compact Abelian
group $G$ is called the Walsh group. A base for the neighborhoods of $G$ can
be given in the following way: 
\begin{eqnarray*}
I_{0}\left( x\right) &:&=G,\,\,\,I_{n}\left( x\right) :=\,I_{n}\left(
x_{0},...,x_{n-1}\right) \\
&:&=\left\{ y\in G:\,y=\left( x_{0},...,x_{n-1},y_{n},y_{n+1},...\right)
\right\} ,
\end{eqnarray*}%
\begin{equation*}
\,\left( x\in G,n\in \mathbb{N}\right) .
\end{equation*}%
These sets are called the dyadic intervals. Let $0=\left( 0:i\in \mathbb{N}%
\right) \in G$ denote the null element of $G,\,\,\,I_{n}:=I_{n}\left(
0\right) \,\left( n\in \mathbb{N}\right) .$ For every finite set $E$ the
number of elents in $E$ we denote by $\left\vert E\right\vert $, i. e. $%
\left\vert E\right\vert :=\left( E\right) ^{\#}.$

For $k\in \mathbb{N}$ and $x\in G$ denote 
\begin{equation*}
r_{k}\left( x\right) :=\left( -1\right) ^{x_{k}}\,\,\,\,\,\,
\end{equation*}%
the $k$-th Rademacher function on dyadic group $G$.

Denote the dyadic expension of $t\in \mathbb{I}$ by%
\begin{equation*}
t=\sum\limits_{j=0}^{\infty }\frac{t_{j}}{2^{j+1}},t_{j}=0,1.
\end{equation*}%
In the case of $t\in \mathbb{Q}$ choose the expension which terminates in
zeros. For $t\in \mathbb{I}$ we denote $\rho \left( t\right) :=\left(
t_{0},t_{1},...\right) \in G.$

The $\sigma $-algebra generated by the dyadic intervals $\left\{ I_{n}\left(
x\right) :x\in G\right\} $ is denoted by $A^{n},$ more precisely,%
\begin{equation*}
A^{n}:=\sigma \left\{ I_{n}\left( x\right) :x\in G\right\} .
\end{equation*}

The expectation and the conditional expectation operators relative to $A^{n}$
$\left( n\in \mathbb{N}\right) $ are denoted by $E$ and $E_{n}$,
respectively.

The norm (or quasinorm) of the space $L_{p}$ is defined by 
\begin{equation*}
\left\Vert f\right\Vert _{p}:=\left( E\left\vert f\right\vert ^{p}\right)
^{1/p}\,\,\,\,\left( 0<p<+\infty \right) .
\end{equation*}

Denote by $f=\left( f^{\left( n\right) },n\in \mathbb{N}\right) $ martingale
with respect to $\left( A^{n},n\in \mathbb{N}\right) $ (for details see, e.
g. \cite{Webook1, Webook2}). The maximal function of a martingale $f$ is
defined by 
\begin{equation*}
f^{\ast }=\sup\limits_{n\in \mathbb{N}}\left\vert f^{\left( n\right)
}\right\vert .
\end{equation*}

In case $f\in L_{1}$, the maximal function can also be given by 
\begin{equation*}
f^{\ast }=\sup\limits_{n\in \mathbb{N}}\left\vert E_{n}f\right\vert .
\end{equation*}

For $0<p<\infty $ the Hardy martingale space $H_{p}$ consists of all
martingales for which

\begin{equation*}
\left\| f\right\| _{H_{p}}:=\left\| f^{*}\right\| _{p}<\infty.
\end{equation*}

For a martingale 
\begin{equation*}
f\sim \sum\limits_{n=0}^{\infty }\left( f^{\left( n\right) }-f^{\left(
n-1\right) }\right)
\end{equation*}%
the conjugate transforms are defined by 
\begin{equation*}
\widetilde{f}^{\left( t\right) }\sim \sum\limits_{n=0}^{\infty }r_{n}\left(
\rho \left( t\right) \right) \left( f^{\left( n\right) }-f^{\left(
n-1\right) }\right) ,
\end{equation*}%
where $t\in \mathbb{I}$ is fixed.

Note that $\widetilde{f}^{\left( 0\right) }=f.$ As is well known, if $f$ is
an integrable function, then conjugate transforms $\widetilde{f}^{\left(
t\right) }$ do exist almost everywhere, but they are not integrable in
general.

The following equation holds (\cite{Webook1, Webook2}) 
\begin{equation*}
\left\Vert \widetilde{f}^{\left( t\right) }\right\Vert _{H_{p}}=\left\Vert
f\right\Vert _{H_{p}}\text{ \ \ }\left( 0<p<\infty ,t\in \mathbb{I}\right) .
\end{equation*}%
Furthermore, Khintchines inequality implies that%
\begin{equation*}
\left\Vert f\right\Vert _{H_{p}}^{p}\sim \int\limits_{\mathbb{I}}\left\Vert 
\widetilde{f}^{\left( t\right) }\right\Vert _{H_{p}}^{p}dt\text{ \ \ \ \ }%
\left( 0<p<\infty \right) .
\end{equation*}

Let $Q\left( L\right) =Q\left( L\right) (\mathbb{I})$ be the Orlicz space 
\cite{KR} generated by Young function $Q$, i.e. $Q$ is convex continuous
even function such that $Q(0)=0$ and

\begin{equation*}
\lim\limits_{u\rightarrow +\infty }\frac{Q\left( u\right) }{u}=+\infty
,\,\,\,\,\lim\limits_{u\rightarrow 0}\frac{Q\left( u\right) }{u}=0.
\end{equation*}

This space is endowed with the norm 
\begin{equation*}
\Vert f\Vert _{Q\left( L\right) (\mathbb{I})}=\inf \{k>0:\int\limits_{%
\mathbb{I}}Q(\left\vert f\right\vert /k)\leq 1\}.
\end{equation*}

In particular, if $Q(u)=u\log (1+u),u>0$ then the corresponding space will
be denoted by $L\log ^{+}L(\mathbb{I})$.

\section{Walsh system and conjugate Fejér means}

Let $m\in \mathbb{N}$, then $m=\sum\limits_{i=0}^{\infty }m_{i}2^{i},$ where 
$m_{i}\in \{0,1\}\,\,\left( i\in \mathbb{N}\right) $, i.e. $m$ is expressed
in the number system of base 2. Denote $\left\vert m\right\vert :=\max
\{j\in \mathbb{N}\mathbf{:}m_{j}\neq 0\}$, that is, $2^{\left\vert
m\right\vert }\leq m<2^{\left\vert m\right\vert +1}.$

The Walsh-Paley system is defined as the sequence of Walsh-Paley functions: 
\begin{equation*}
w_{m}\left( x\right) :=\prod\limits_{k=0}^{\infty }\left( r_{k}\left(
x\right) \right) ^{m_{k}}=r_{\left\vert m\right\vert }\left( x\right) \left(
-1\right) ^{\sum\limits_{k=0}^{\left\vert m\right\vert
-1}m_{k}x_{k}}\,\,\,\,\,\,\left( x\in G,m\in \mathbb{P}\right) .
\end{equation*}

The Walsh-Dirichlet kernel is defined by 
\begin{equation*}
D_{n}\left( x\right) =\sum\limits_{k=0}^{n-1}w_{k}\left( x\right).
\end{equation*}

Recall that (\cite{S-W-S}, \cite{G-E-S}) 
\begin{equation}
D_{2^{n}}\left( x\right) =\left\{ 
\begin{array}{l}
2^{n},\mbox{if }x\in I_{n}, \\ 
0,\,\,\,\mbox{if }x\in G\backslash I_{n},%
\end{array}%
\right.  \label{dir}
\end{equation}%
and%
\begin{equation}
D_{n}\left( x\right) =w_{n}\left( x\right) \sum\limits_{k=0}^{\infty
}n_{k}\left( D_{2^{k+1}}\left( x\right) -D_{2^{k}}\left( x\right) \right) .
\label{dir2}
\end{equation}

Let $x\in I_{j}\backslash I_{j+1}$. Then from (\ref{dir2}) we have%
\begin{equation*}
D_{n}\left( x\right) =w_{n}\left( x\right) \left(
\sum\limits_{k=0}^{j-1}n_{k}2^{k}-n_{j}2^{j}\right) .
\end{equation*}

Hence,%
\begin{equation*}
\left\vert D_{n}\left( x\right) \right\vert =\alpha _{j}\left( n\right) ,
\end{equation*}%
where%
\begin{equation*}
\alpha _{j}\left( n\right) =\left\vert
\sum\limits_{k=0}^{j-1}n_{k}2^{k}-n_{j}2^{j}\right\vert .
\end{equation*}

The partial sums of the Walsh-Fourier series are defined as follows:

\begin{equation*}
S_{M}f:=\sum\limits_{i=0}^{M-1}\widehat{f}\left( i\right) w_{i},
\end{equation*}%
where the number 
\begin{equation*}
\widehat{f}\left( i\right) =E\left( fw_{i}\right)
\end{equation*}%
is said to be the $i$th Walsh-Fourier coefficient of the function\thinspace $%
f.$ It is easy to see that $E_{n}\left( f\right) =S_{2^{n}}\left( f\right) .$

For any given $n\in \mathbb{N}$ it is possible to write $n$ uniquely as%
\begin{equation*}
n=\sum\limits_{k=0}^{\infty }n_{k}2^{k},
\end{equation*}%
where $n_{k}=0$ or $1$ for $k\in \mathbb{N}$. This expression will be called
the binary expansion of $n$ and the numbers $n_{k}$ will be called the
binary coefficients of $n$.

Define the variation of an $n\in \mathbb{N}$ with binary coefficients $%
\left( n_{k}:k\in \mathbb{N}\right) $ by%
\begin{equation*}
V\left( n\right) :=\sum\limits_{k=1}^{\infty }\left\vert
n_{k}-n_{k-1}\right\vert +n_{0}.
\end{equation*}

For $n,m\in \mathbb{N}$ and $2^{N}\leq n<2^{N+1}$ we define

\begin{equation*}
T(n,m):=\left\{ i:n_{i}\neq n_{i-1},m_{i}=m_{i-1},i=0,1,...,N-1\right\} .
\end{equation*}

The Fej\'er means of Walsh-Fourier series is defined as follows%
\begin{equation*}
\sigma _{n}f:=\frac{1}{n}\sum\limits_{k=0}^{n-1}S_{k}f\text{ \ \ \ }\left(
n\in \mathbb{P}\right) .
\end{equation*}

If $f\in L_{1}$ then it is easy to show that the sequence $\left(
E_{n}\left( f\right) :n\in \mathbb{N}\right) $ is a martingale. If $f$ is a
martingale, that is $f=(f^{\left( n\right) }:n\in \mathbb{N})$ then the
Walsh-Fourier coefficients must be defined in a little bit different way: 
\begin{equation}
\widehat{f}\left( i\right) =\lim\limits_{k\rightarrow \infty }E\left(
f^{\left( k\right) }w_{i}\right) .  \label{MC}
\end{equation}

The Walsh-Fourier coefficients of $f\in L_{1}$ are the same as the ones of
the martingale $\left( E_{n}\left( f\right) :n\in \mathbb{N}\right) $
obtained from $f$.

Let%
\begin{equation*}
\beta _{0}\left( t\right) :=r_{0}\left( \rho \left( t\right) \right) ,\beta
_{k}\left( t\right) :=r_{n}\left( \rho \left( t\right) \right) \text{ if }%
2^{n-1}\leq k<2^{n}.
\end{equation*}%
Then the $n$th partial sums of the conjugate transforms is given by%
\begin{equation*}
\widetilde{S}_{n}^{\left( t\right) }f:=\sum\limits_{k=0}^{n-1}\beta
_{k}\left( t\right) \widehat{f}\left( k\right) w_{k}\text{ \ \ }\left( t\in 
\mathbb{I},n\in \mathbb{P}\right) .
\end{equation*}%
Let $2^{N}\leq n<2^{N+1}$ \ $\left( E_{-1}f=0\right) $. Then we have 
\begin{eqnarray*}
\widetilde{S}_{n}^{\left( t\right) }f &=&\rho _{0}\left( \rho \left(
t\right) \right) \widehat{f}\left( 0\right)
w_{0}+\sum\limits_{l=1}^{N}r_{l}\left( \rho \left( t\right) \right) \left(
E_{l}f-E_{l-1}f\right) \\
&&+r_{N+1}\left( \rho \left( t\right) \right) \left( S_{n}f-E_{N}f\right) \\
&=&Ef+\sum\limits_{l=1}^{N}r_{l}\left( \rho \left( t\right) \right) \left(
E_{l}f-E_{l-1}f\right) \\
&&+r_{N+1}\left( \rho \left( t\right) \right) \left( S_{n}f-E_{N}f\right) \\
&=&f\ast \widetilde{D}_{n}^{\left( t\right) },
\end{eqnarray*}%
where%
\begin{equation*}
\widetilde{D}_{n}^{\left( t\right) }=1+\sum\limits_{i=0}^{N-1}\left(
-1\right) ^{t_{i+1}}\left( D_{2^{i+1}}-D_{2^{i}}\right) +\left( -1\right)
^{t_{N+1}}\left( D_{n}-D_{2^{N}}\right) .
\end{equation*}%
It is easy to see that $\left( -1\right) ^{t_{l}}=1-2t_{l}.$ Then for $%
\widetilde{D}_{n}^{\left( t\right) }$ we can write%
\begin{eqnarray*}
\widetilde{D}_{n}^{\left( t\right) } &=&1+\sum\limits_{i=0}^{N-1}\left(
1-2t_{i+1}\right) \left( D_{2^{i+1}}-D_{2^{i}}\right) \\
&&+\left( 1-2t_{N+1}\right) \left( D_{n}-D_{2^{N}}\right) \\
&=&D_{n}-2\sum\limits_{i=0}^{N-1}t_{i+1}\left( D_{2^{i+1}}-D_{2^{i}}\right)
-2t_{N+1}\left( D_{n}-D_{2^{N}}\right) .
\end{eqnarray*}

Set%
\begin{equation*}
m:=\sum\limits_{i=0}^{N-1}t_{i+1}2^{i}<2^{N}.
\end{equation*}%
Then from (\ref{dir2}) we get%
\begin{equation*}
\widetilde{D}_{n}^{\left( t\right) }=D_{n}-2w_{m}D_{m}-2t_{N+1}\left(
D_{n}-D_{2^{N}}\right) .
\end{equation*}

The conjugate $\left( C,1\right) $-means of a martingale $f$ are introduced
by%
\begin{equation*}
\widetilde{\sigma }_{n}^{\left( t\right) }f:=\frac{1}{n}\sum%
\limits_{k=0}^{n-1}\widetilde{S}_{k}^{\left( t\right) }f\text{ \ \ \ }\left(
t\in \mathbb{I},n\in \mathbb{P}\right) .
\end{equation*}

The notiation $a\lesssim b$ in the proofs stands for $a<c\cdot b$, where $c$
is an absolute constant.

\bigskip

\section{Two Sides Estimation of Lebesgue Constant of Conjugate
Walsh-Fourier Series}

Denote by $L_{n}$ the lebesgue constants of the Walsh system:%
\begin{equation*}
L_{n}:=\int\limits_{G}\left\vert D_{n}\left( t\right) \right\vert d\mu .
\end{equation*}%
These constants were studied by many authors. For the trigonometric system
it is important to note that results of Fej\'er and Szegő, the letter gave
in \cite{Sz} an explicit formula for Lebesgue constants, namely,%
\begin{equation*}
L_{n}=\frac{16}{\pi ^{2}}\sum\limits_{k=1}^{\infty }\frac{1}{4k^{2}-1}\left(
1+\frac{1}{3}+\frac{1}{5}+\cdots +\frac{1}{2k\left( 2n+1\right) -1}\right) .
\end{equation*}

Along with the trigonometric system the Walsh-Paley system is also severely
studied for its importance in applications. The most properties of Lebesgue
constants with respect to the Walsh-Paley system were obtained by Fine in 
\cite{Fi2}. In (\cite{S-W-S}, p. 34), the two-sided estimate 
\begin{equation}
\frac{V\left( n\right) }{8}\leq L_{n}\leq V\left( n\right)  \label{v}
\end{equation}%
is proved. In \cite{luk}, Lukomskii presented the estimate $L_{n}\geq
V\left( n\right) /4$. Malykhin, Telyakovskii and Kholshchevnikova \cite{MTK}
improved the estimation (\ref{v}) and proved the following

\begin{est}
For any positive integer $n$, the two-sided inequality 
\begin{equation*}
\frac{V\left( n\right) +1}{3}\leq L_{n}<V\left( n\right)
\end{equation*}%
is valid. Here the factors $1/3$ and $1$ are sharp.
\end{est}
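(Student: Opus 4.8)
The plan is to prove the two-sided estimate for the Walsh Lebesgue constants $L_n$ by exploiting the explicit pointwise formula $|D_n(x)| = \alpha_j(n)$ for $x \in I_j \setminus I_{j+1}$, together with the dyadic structure $|I_j \setminus I_{j+1}| = 2^{-(j+1)}$. Writing $2^N \le n < 2^{N+1}$, one gets the exact identity
\begin{equation*}
L_n = \sum_{j=0}^{N} 2^{-(j+1)} \, \alpha_j(n) = \sum_{j=0}^{N} 2^{-(j+1)} \left\vert \sum_{k=0}^{j-1} n_k 2^k - n_j 2^j \right\vert,
\end{equation*}
since $D_n$ is supported on $I_0 = G$ and is constant (in absolute value) on each ring $I_j \setminus I_{j+1}$ for $j \le N$, and vanishes off $I_{N+1}$ when... more carefully, one tracks the support down to level $N$. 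So the first step is to record this clean finite-sum representation and double-check the boundary term at $j=N$.

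The second step is the lower bound $L_n \ge (V(n)+1)/3$. The idea is to group the terms of the sum according to the "blocks" of the binary expansion of $n$: a maximal run of equal consecutive binary digits contributes, via a telescoping/geometric estimate, at least a fixed fraction of the number of sign-changes it generates, i.e. of its contribution to $V(n)$. Concretely, when $n_j \ne n_{j-1}$ the quantity $\alpha_j(n)$ is comparable to $2^j$ (the leading term $n_j 2^j$ or $n_{j-1}2^{j-1}+\cdots$ dominates), so the term $2^{-(j+1)}\alpha_j(n)$ is bounded below by a constant; summing and optimizing the constant carefully — this is where the sharp value $1/3$ comes from, and one likely needs to treat the highest block (which gives the "$+1$") and handle the worst-case configuration $n = 2^N + 2^{N-2} + 2^{N-4} + \cdots$ or similar — yields the claim. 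Showing the constant $1/3$ cannot be improved requires exhibiting a sequence $n^{(j)}$ with $L_{n^{(j)}}/V(n^{(j)}) \to 1/3$.

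The third step is the upper bound $L_n < V(n)$. From the representation above, $\alpha_j(n) \le \sum_{k=0}^{j} n_k 2^k \le$ (something like $2^{j+1}$ when $n_j = 1$, and a bound in terms of lower digits when $n_j = 0$); more efficiently one uses the recursive/telescoping structure of $\alpha_j(n)$ in terms of $\alpha_{j-1}(n)$ to bound the whole sum by $\sum_k |n_k - n_{k-1}| + n_0 = V(n)$ with strict inequality. The sharpness of the constant $1$ is seen by taking $n = 2^N$ (so $V(n)=1$, $L_{2^N}=1$), or $n$ with a single long block, where $L_n/V(n) \to 1$.

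The main obstacle I expect is not the rough two-sided bounds — those follow fairly mechanically from $|D_n(x)| = \alpha_j(n)$ — but rather pinning down the \emph{sharp} constants $1/3$ and $1$, and in particular proving the lower bound with the exact constant $1/3$ and the additive $+1$. This requires a careful combinatorial optimization over all binary patterns of $n$: one must identify the extremal configuration, show it is genuinely extremal (a convexity or exchange argument on the digit string), and verify the asymptotics of the extremal family. I would organize this as a sequence of lemmas: (i) the finite-sum formula; (ii) a lower estimate of each block's contribution; (iii) the global lower bound via summation; (iv) the matching extremal example; (v) the upper bound and its extremal example.
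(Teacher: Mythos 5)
First, a point of orientation: the paper does not prove this statement at all --- it is quoted as Theorem MTK from Malykhin--Telyakovskii--Kholshchevnikova \cite{MTK} --- so the only in-paper material to compare with is the Section~3 analysis of the related sum $S(n)=\sum_{i\in A(n)}\alpha_i(n)/2^{i+1}$, where the digit-exchange argument $S(n)\geq S(n(e))$ reduces to the alternating pattern $n'=(0101\ldots)$ and yields $S(n)\geq V(n)/3$. Your step (ii)--(iii) is in that same spirit, and your guess for the extremal configuration of the lower bound (the alternating pattern $2^N+2^{N-2}+\cdots$) is the right one. But as written your proposal has genuine gaps. (1) Your ``exact identity'' is wrong as stated: $D_n$ does not vanish on $I_{N+1}$; it equals $n$ there (all $D_{2^k}$, $k\le N+1$, equal $2^k$ on $I_{N+1}$ and $w_n\equiv 1$ there), so the correct formula is $L_n=\sum_{j=0}^{N}\alpha_j(n)2^{-(j+1)}+n\,2^{-(N+1)}$, with an extra term lying in $[1/2,1)$. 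This is not cosmetic: for $n=2^N$ one has $(V(n)+1)/3=1$ and your truncated sum equals only $1/2$, so without the $I_{N+1}$ term the claimed lower bound already fails; the ``$+1$'' in the theorem comes precisely from the tail terms $j=N$ and $I_{N+1}$. (2) Your sharpness example for the factor $1$ is incorrect: $V(2^N)=2$ (two digit changes, at $k=N$ and $k=N+1$) while $L_{2^N}=1$, so the ratio is $1/2$, not $1$; an actual extremal family is $n=2^{N+1}-1$ (all digits $1$), for which $L_n\to 2=V(n)$.

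(3) Most importantly, the heart of the theorem --- the exact constant $1/3$ together with the additive $+1$, and the strict inequality $L_n<V(n)$ --- is exactly what you defer to ``a careful combinatorial optimization'' organized into unproved lemmas. The rough two-sided comparability $L_n\asymp V(n)$ does follow mechanically from $|D_n|=\alpha_j(n)$ on $I_j\setminus I_{j+1}$, but that is the classical estimate already in \cite{S-W-S}; the content of Theorem MTK is the sharp constants, and your text names the difficulty without resolving it (no exchange/convexity lemma is actually proved, no verification that the alternating pattern is extremal with the stated additive correction, no argument for strictness of the upper bound). So the proposal should be regarded as a plausible outline rather than a proof: to complete it you would need, at minimum, the corrected finite-sum formula with the $n\,2^{-(N+1)}$ term, an exchange argument of the type the paper uses for $S(n)$ but applied to the full sum including the boundary terms, and correct extremal families for both constants.
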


We would like to mention the work of Astashkin and Semenov \cite{AS} in
which the sharp two-sided estimate for Lebesgue constants with respect to
Walsh-Paley system are obtained.

In the paper \cite{tol} Toledo studied the sequence of numbers $\left\Vert
K_{n}\right\Vert _{1}$ which consists of the $L_{1}$-norm of Walsh-Fejér
kernels and he proved that 
\begin{equation*}
\sup \left\{ \left\Vert K_{n}\right\Vert _{1}:n\in \mathbb{P}\right\} =\frac{%
17}{15}.
\end{equation*}%
Denote by $L_{n}^{\left( t\right) }$ the lebesgue constants of the conjugate
transforms:%
\begin{equation*}
L_{n}^{\left( t\right) }:=\int\limits_{G}\left\vert \widetilde{D}%
_{n}^{\left( t\right) }\left( t\right) \right\vert d\mu .
\end{equation*}%
Let $n$ is positive integer and 
\begin{equation*}
m=\sum\limits_{i=0}^{N-1}t_{i+1}2^{i},2^{N}\leq n<2^{N+1}.
\end{equation*}%
Set%
\begin{equation*}
T\left( n,m\right) :=\left\{ i:n_{i}\neq n_{i-1},m_{i}=m_{i-1}\right\} .
\end{equation*}

In this section we study the properties of the Lebesgue constant of the
conjugate transforms.

\begin{theorem}
\label{TS}The two-sides inequality%
\begin{eqnarray*}
&&\max (\frac{1}{2}\left\vert T(m,n)\right\vert +\frac{1}{3}V(n)-1,\frac{1}{4%
}\left\vert T(n,m)\right\vert +\frac{2}{3}V(m)-1) \\
&\leq &L_{n}^{\left( t\right) }\leq 2V(m)+\left\vert T(n,m)\right\vert +2
\end{eqnarray*}%
is valid.
\end{theorem}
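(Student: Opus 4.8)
The plan is to work directly with the explicit formula
$\widetilde{D}_{n}^{(t)}=D_{n}-2w_{m}D_{m}-2t_{N+1}(D_{n}-D_{2^{N}})$
derived in Section~2, and to evaluate $L_{n}^{(t)}=\int_{G}|\widetilde{D}_{n}^{(t)}|\,d\mu$ ring by ring on the partition $G=\bigcup_{j\ge 0}(I_{j}\setminus I_{j+1})$ (together with the single point whose contribution is null). On the ring $I_{j}\setminus I_{j+1}$ we know $|D_{n}(x)|=\alpha_{j}(n)=|\sum_{k=0}^{j-1}n_{k}2^{k}-n_{j}2^{j}|$ and likewise $|D_{m}(x)|=\alpha_{j}(m)$, while $w_{m}$ is constant and equal to $\pm1$ there, and $D_{2^{N}}$ is supported on $I_{N}$. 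So on each ring $\widetilde{D}_{n}^{(t)}$ is (up to the known sign $w_{m}(x)$, which I can factor out since $|w_{m}|=1$) a signed combination of the three Dirichlet kernels with integer values, and the measure of the ring is $2^{-(j+1)}$. Summing $2^{-(j+1)}$ times these ring-values over $j$ gives $L_{n}^{(t)}$ exactly, reducing everything to an arithmetic estimate on the $\alpha_{j}$'s and the interaction of the binary digits of $n$ and $m$.

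For the \emph{upper} bound I would use the triangle inequality on the formula: $L_{n}^{(t)}\le L_{n}+2\|w_{m}D_{m}\|_{1}+2|t_{N+1}|(L_{n}+\|D_{2^{N}}\|_{1})\le 2L_{n}+2L_{m}+2$, and then invoke the sharp Malykhin--Telyakovskii--Kholshchevnikova bound $L_{n}<V(n)$ and $L_{m}<V(m)$. This gives $L_{n}^{(t)}<2V(n)+2V(m)+2$, which is weaker than the claimed $2V(m)+|T(n,m)|+2$; so the honest upper bound has to be done on the rings, where the key point is a cancellation: on a ring $I_{j}\setminus I_{j+1}$ with $n_{j}=n_{j-1}$ one expects $\alpha_{j}(n)$ and the $m$-part to partially cancel, and the ``surviving'' rings are exactly those indexed by $T(n,m)$ (digits where $n$ jumps but $m$ does not) plus the $O(1)$ worth of rings near level $N$ coming from the $t_{N+1}(D_{n}-D_{2^{N}})$ term. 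Bounding the total contribution of the non-cancelling part by $2V(m)$ and the $T(n,m)$-rings by $|T(n,m)|$ (each such ring contributes $O(1)$ after the cancellation, more precisely a controlled multiple of $2^{j}\cdot 2^{-(j+1)}$) yields the stated inequality.

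For the \emph{lower} bound one needs two competing estimates and takes their maximum. The first, $\tfrac12|T(m,n)|+\tfrac13 V(n)-1$, should come from restricting attention to the rings where $D_{m}$ contributes little (the set $T(m,n)$, digits where $m$ jumps but $n$ does not — note the swapped roles) so that $|\widetilde{D}_{n}^{(t)}|$ is essentially $|D_{n}|$ there, giving back something like the lower half of the MTK bound $\tfrac13(V(n)+1)$ for $L_{n}$, augmented by the extra $\tfrac12|T(m,n)|$ from the $w_{m}D_{m}$ term on those rings. Symmetrically, the second estimate $\tfrac14|T(n,m)|+\tfrac23 V(m)-1$ comes from noting $\widetilde{D}_{n}^{(t)}=-2w_{m}D_{m}+(D_{n}-2t_{N+1}(D_{n}-D_{2^{N}}))$, so $\|\widetilde{D}_{n}^{(t)}\|_{1}\ge 2\|D_{m}\|_{1}-\|D_{n}-2t_{N+1}(D_{n}-D_{2^{N}})\|_{1}$ is the wrong direction — instead one restricts to rings where $D_{n}$ and $D_{2^{N}}$ are small (roughly $j<N$ with $n_{j}=n_{j-1}$), on which $|\widetilde{D}_{n}^{(t)}|\approx 2|D_{m}|$, recovering $2\cdot\tfrac13 V(m)$ plus the $\tfrac14|T(n,m)|$ boost. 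In both cases the $-1$ absorbs boundary rings and the loss from ``essentially equal'' versus ``equal.''

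The main obstacle I anticipate is the precise bookkeeping of the cancellations on the rings indexed by $T(n,m)$ and $T(m,n)$: one must show that on a ring where both $n$ and $m$ have a digit-jump the kernels $D_{n}$ and $w_{m}D_{m}$ align well enough to cancel down to $O(1)$ (contributing nothing beyond what $V(n)$ or $V(m)$ already counts), whereas on a ring where exactly one of them jumps there is a genuine $\Theta(1)$ surplus, and to track the two $t_{N+1}$-dependent endpoint terms uniformly in whether $t_{N+1}=0$ or $1$. This is the step where the MTK-style careful summation of the $\alpha_{j}(n)2^{-(j+1)}$ (rather than the crude $V(n)$) is needed to land the sharp constants $\tfrac13,\tfrac23,\tfrac12,\tfrac14$ rather than mere $O(1)$ multiples.
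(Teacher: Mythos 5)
You start from the right place---the explicit formula for $\widetilde D_n^{(t)}$ and the ring decomposition $G=\bigcup_j (I_j\setminus I_{j+1})$, which is exactly how the paper begins---but the two mechanisms you propose to run on the rings are not the ones that work, and the pieces you defer are precisely the substance of the proof. First, there is no ``partial cancellation'' between $D_n$ and $w_mD_m$ to bookkeep. The paper pairs, inside each ring $I_i\setminus I_{i+1}$ ($i<N$), the two sub-cells obtained by flipping the coordinate $x_N$ (which flips the sign of $w_mD_m$ relative to $(1-2t_{N+1})D_n$) and uses the identity $\tfrac12\left(|a-b|+|a+b|\right)=\max\{|a|,|b|\}$ to get the \emph{exact} value
\begin{equation*}
\int_{I_i\setminus I_{i+1}}\bigl\vert \widetilde D_n^{(t)}\bigr\vert\,d\mu=\frac{\max\left\{ \alpha_i(n),\,2\alpha_i(m)\right\} }{2^{i+1}},
\end{equation*}
so the larger kernel always wins in full. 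The upper bound $2V(m)+|T(n,m)|+2$ then comes not from cancellation on rings where $n_j=n_{j-1}$, but from the observation that $\alpha_j(n)$ and $\alpha_j(m)$ are constant between consecutive indices of $A(m)\cup T(n,m)$ while the weights $2^{-(j+1)}$ decay geometrically, so each block contributes at most twice its leading term (at most $1$ for a $T(n,m)$-index, at most $2$ for an $A(m)$-index), plus at most $2$ from the pieces on $I_N\setminus I_{N+1}$ and $I_{N+1}$. Your cancellation heuristic, taken literally, cannot work: on each ring you are integrating the absolute value of a function whose modulus is constant on the paired sub-cells, and the max identity shows nothing is lost or gained there.

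Second, for the lower bounds you cannot ``give back something like the lower half of the MTK bound.'' What is actually needed is the restricted-sum inequality $S(n):=\sum_{i\in A(n)}\alpha_i(n)/2^{i+1}\ge \tfrac13 V(n)$ (and hence $2S(m)\ge\tfrac23 V(m)$): the sum must run only over the jump set $A(n)$ (resp.\ $A(m)$), because it is combined with the \emph{disjoint} contribution $\sum_{i\in T(m,n)}2\alpha_i(m)/2^{i+1}\ge\tfrac12|T(m,n)|$ (resp.\ $\sum_{i\in T(n,m)}\alpha_i(n)/2^{i+1}\ge\tfrac14|T(n,m)|$), using $T(n,m)\cap A(m)=\varnothing$ and $\alpha_i\ge 2^{i-1}$ at a jump index. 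The MTK theorem estimates the full sum defining $L_n$, not this restricted sum, so it cannot be cited to produce the constant $\tfrac13$; the paper proves the restricted inequality by a separate digit-deletion argument (deleting a digit $e$ with $n_e=n_{e-1}$ never increases $S$ and preserves $V$, reducing to the alternating pattern $0101\dots$, for which $S=\tfrac23 s+\tfrac19\left(1-4^{-s}\right)$ while $V=2s$). Until you supply this lemma and replace the cancellation picture by the max identity plus the geometric-decay grouping, your proposal remains a plan rather than a proof. A smaller point: your attribution in the first lower bound is swapped---the term $\tfrac12|T(m,n)|$ comes from $2\alpha_i(m)$ on rings where $m$ jumps and $n$ does not, while $\tfrac13 V(n)$ comes from $\alpha_i(n)$ on $A(n)$.
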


We have%
\begin{eqnarray}
L_{n}^{\left( t\right) }
&=&\sum\limits_{i=0}^{N-1}\int\limits_{I_{i}\backslash I_{i+1}}\left\vert 
\widetilde{D}_{n}^{\left( t\right) }\right\vert d\mu  \label{J1-J3} \\
&&+\int\limits_{I_{N}\backslash I_{N+1}}\left\vert \widetilde{D}_{n}^{\left(
t\right) }\right\vert d\mu +\int\limits_{I_{N+1}}\left\vert \widetilde{D}%
_{n}^{\left( t\right) }\right\vert d\mu  \notag \\
&:&=J_{1}+J_{2}+J_{3}.  \notag
\end{eqnarray}%
It is easy to see that%
\begin{equation*}
\int\limits_{I_{i}\backslash I_{i+1}}\left\vert \widetilde{D}_{n}^{\left(
t\right) }\right\vert d\mu
\end{equation*}%
\begin{equation*}
=\sum\limits_{x_{i+1}=0}^{1}\cdots
\sum\limits_{x_{N}=0}^{1}\int\limits_{I_{N+1}\left(
0,...,0,x_{i}=1,x_{i+1},...,x_{N}\right) }\left\vert \widetilde{D}%
_{n}^{\left( t\right) }\right\vert d\mu
\end{equation*}%
\begin{equation*}
=\sum\limits_{x_{i+1}=0}^{1}\cdots
\sum\limits_{x_{N}=0}^{1}\int\limits_{I_{N+1}\left(
0,...,0,x_{i}=1,x_{i+1},...,x_{N-1},x_{N}\right) }\left\vert \left(
1-2t_{N+1}\right) D_{n}-2w_{m}D_{m}\right\vert d\mu
\end{equation*}%
\begin{equation*}
=\sum\limits_{x_{i+1}=0}^{1}\cdots \sum\limits_{x_{N-1}=0}^{1}\left(
\int\limits_{I_{N+1}\left( 0,...,0,x_{i}=1,x_{i+1},...,x_{N-1},0\right)
}\right)
\end{equation*}%
\begin{equation*}
\left\vert \left( 1-2t_{N+1}\right) D_{n}-2w_{m}D_{m}\right\vert d\mu
\end{equation*}%
\begin{equation*}
+\sum\limits_{x_{i+1}=0}^{1}\cdots \sum\limits_{x_{N-1}=0}^{1}\left(
\int\limits_{I_{N+1}\left( 0,...,0,x_{i}=1,x_{i+1},...,x_{N-1},1\right)
}\right)
\end{equation*}%
\begin{equation*}
\left\vert \left( 1-2t_{N+1}\right) D_{n}-2w_{m}D_{m}\right\vert d\mu
\end{equation*}%
\begin{equation*}
=\sum\limits_{x_{i+1}=0}^{1}\cdots \sum\limits_{x_{N-1}=0}^{1}\left(
\int\limits_{I_{N+1}\left( 0,...,0,x_{i}=1,x_{i+1},...x_{N-1},0\right)
}\right)
\end{equation*}%
\begin{equation*}
\left( \left\vert \left( 1-2t_{N+1}\right) D_{n}-2w_{m}D_{m}\right\vert
+\left\vert \left( 1-2t_{N+1}\right) D_{n}+2w_{m}D_{m}\right\vert \right)
d\mu
\end{equation*}%
Since%
\begin{equation*}
\frac{\left\vert a-b\right\vert +\left\vert a+b\right\vert }{2}=\max \left\{
\left\vert a\right\vert ,\left\vert b\right\vert \right\}
\end{equation*}%
we have%
\begin{eqnarray*}
\int\limits_{I_{i}\backslash I_{i+1}}\left\vert \widetilde{D}_{n}^{\left(
t\right) }\right\vert d\mu &=&\int\limits_{I_{i}\backslash I_{i+1}}\max
\left\{ \alpha _{i}\left( n\right) ,2\alpha _{i}\left( m\right) \right\} d\mu
\\
&=&\frac{\max \left\{ \alpha _{i}\left( n\right) ,2\alpha _{i}\left(
m\right) \right\} }{2^{i+1}}.
\end{eqnarray*}%
Hence%
\begin{equation}
J_{1}=\sum\limits_{i=0}^{N-1}\frac{\max \left\{ \alpha _{i}\left( n\right)
,2\alpha _{i}\left( m\right) \right\} }{2^{i+1}}.  \label{J1}
\end{equation}%
For $J_{2}$ and $J_{3}$ we can write%
\begin{eqnarray}
J_{2} &=&\int\limits_{I_{N}\backslash I_{N+1}}\left\vert
D_{n}-2w_{m}D_{m}-2t_{N+1}\left( D_{n}-D_{2^{N}}\right) \right\vert d\mu
\label{J2} \\
&=&\frac{\left\vert 2^{N}-n^{\prime }-2m-2t_{N+1}\left( 2^{N}-n^{\prime
}-2^{N}\right) \right\vert }{2^{N+1}}  \notag \\
&=&\frac{\left\vert 2^{N}-n^{\prime }-2m+2t_{N+1}n^{\prime }\right\vert }{%
2^{N+1}},  \notag
\end{eqnarray}%
where $n=2^{N}+n^{\prime },n^{^{\prime }}<{2}^{N}$ and%
\begin{eqnarray}
J_{3} &=&\int\limits_{I_{N+1}}\left\vert D_{n}-2w_{m}D_{m}-2t_{N+1}\left(
D_{n}-D_{2^{N}}\right) \right\vert d\mu  \label{J3} \\
&=&\frac{\left\vert n-2m-2t_{N+1}\left( n-2^{N}\right) \right\vert }{2^{N+1}}%
.  \notag
\end{eqnarray}

Combining (\ref{J1-J3})-(\ref{J3}) we conclude that%
\begin{eqnarray*}
\left\Vert \widetilde{D}_{n}^{\left( t\right) }\right\Vert _{1}
&=&\sum\limits_{i=0}^{N-1}\frac{\max \left\{ \alpha _{i}\left( n\right)
,2\alpha _{i}\left( m\right) \right\} }{2^{i+1}} \\
&&+\frac{\left\vert 2^{N+1}-n-2m+2t_{N+1}\left( n-2^{N}\right) \right\vert }{%
2^{N+1}} \\
&&+\frac{\left\vert n-2m-2t_{N+1}\left( n-2^{N}\right) \right\vert }{2^{N+1}}%
.
\end{eqnarray*}

Set 
\begin{equation*}
A(n)=\{i:\left\vert n_{i}-n_{i-1}\right\vert =1)
\end{equation*}%
and%
\begin{equation*}
S\left( n\right) :=\sum\limits_{i\in A\left( n\right) }\frac{\alpha
_{i}\left( n\right) }{2^{i+1}}.
\end{equation*}

First, we prove that 
\begin{equation*}
S(n)\geq S(n(e)),
\end{equation*}

\bigskip where 
\begin{equation*}
n(e)=n_{N}2^{N}+...+n_{e+1}2^{e+1}+n_{e-1}2^{e}+....+n_{0}2^{1}.
\end{equation*}

and 
\begin{equation*}
n_{e}=n_{e-1}\neq n_{e+1}.
\end{equation*}

Set 
\begin{equation*}
A_{e}(n)=\{i:i\in A(n),i>e\}.
\end{equation*}

Then,%
\begin{eqnarray*}
S(n)-S(n(e)) &=&\sum_{i\in A_{e}(n)}\left( \frac{\alpha _{i}\left( n\right) 
}{2^{i+1}}\_\frac{\alpha _{i}\left( n(e)\right) }{2^{i+1}}\right) \\
&&+\sum_{i\in A(n)\backslash A_{e}(n)}\left( \frac{\alpha _{i}\left(
n\right) }{2^{i+1}}\_\frac{\alpha _{i+1}\left( n(e)\right) }{2^{i+2}}\right)
.
\end{eqnarray*}

Since%
\begin{equation*}
\frac{\alpha _{i}\left( n\right) }{2^{i+1}}\_\frac{\alpha _{i+1}\left(
n(e)\right) }{2^{i+2}}=0\text{ \ \ }\left( i\in A(n)\backslash
A_{e}(n)\right) ,
\end{equation*}

we get 
\begin{equation*}
S(n)-S(n(e))=\sum_{i\in A_{e}(n)}\left( \frac{\alpha _{i}\left( n\right) }{%
2^{i+1}}\_\frac{\alpha _{i}\left( n(e)\right) }{2^{i+1}}\right) .
\end{equation*}

We can write%
\begin{eqnarray*}
&&\alpha _{i}\left( n\right) -\alpha _{i}\left( n(e)\right) \\
&=&\left\vert n_{0}2^{0}+\cdots +n_{i-1}2^{i-1}-n_{i}2^{i}\right\vert \\
&&-\left\vert n\left( e\right) _{0}2^{0}+\cdots +n\left( e\right)
_{i-1}2^{i-1}-n\left( e\right) _{i}2^{i}\right\vert \\
&=&\left\vert n_{0}2^{0}+\cdots +n_{i-1}2^{i-1}-n_{i}2^{i}\right\vert \\
&&-\left\vert n_{0}2^{1}+\cdots +n_{e-1}2^{e}+n_{e+1}2^{e+1}+\cdots
+n_{i-1}2^{i-1}-n_{i}2^{i}\right\vert .
\end{eqnarray*}

Suppose that $n_{i}=1$. Then we can write%
\begin{eqnarray*}
&&\alpha _{i}\left( n\right) -\alpha _{i}\left( n(e)\right) \\
&=&2^{i}-n_{0}2^{0}-\cdots -n_{i-1}2^{i-1} \\
&&-(2^{i}-n_{0}2^{1}-\cdots -n_{e-1}2^{e}-n_{e+1}2^{e+1}-\cdots
-n_{i-1}2^{i-1}) \\
&=&\sum_{i=0}^{e-1}n_{i}2^{i}-n_{e}2^{e}.
\end{eqnarray*}

Let now consider the case when $n_{i}=0.$ Then we have%
\begin{equation*}
\alpha _{i}\left( n\right) -\alpha _{i}\left( n(e)\right) =-\left(
\sum_{j=0}^{e-1}n_{j}2^{j}-n_{e}2^{e}\right) .
\end{equation*}

So, we get following 
\begin{equation*}
\alpha _{i}\left( n\right) -\alpha _{i}\left( n(e)\right) =(2\left\vert
n_{i}-n_{e}\right\vert -1)\left\vert
\sum_{j=0}^{e-1}n_{j}2^{j}-2^{e}n_{e}\right\vert .
\end{equation*}

And finally we get 
\begin{eqnarray*}
&&S(n)-S(n(e)) \\
&=&\left\vert \sum_{j=0}^{e-1}n_{j}2^{j}-2^{e}n_{e}\right\vert \sum_{i\in
A_{e}(n)}\frac{(2\left\vert n_{i}-n_{e}\right\vert -1)}{2^{i+1}} \\
&>&\frac{1}{2^{e+2}}-\sum\limits_{i=e+2}^{\infty }\frac{1}{2^{i+1}}\geq 0.
\end{eqnarray*}

From the definition of function $V\left( n\right) $ it is easy to see that

\begin{equation*}
V(n)=V(n(e)).
\end{equation*}

If we continue this process it is easy to see that we will get

\begin{equation*}
n^{\prime }=\sum_{i=0}^{|n^{\prime }|-1}n_{i}^{\prime }2^{i},
\end{equation*}

where, for $0\leq i<|n^{\prime }|$\ \ \ $n_{i}^{\prime }+n_{i+1}^{\prime
}=1. $ Without lost of generality we can suppose that,$\ n_{0}^{\prime }=0$.
Hence%
\begin{equation*}
n^{\prime }=\left( 010101...\right) .
\end{equation*}%
Note, that 
\begin{equation*}
V(n)=V(n^{\prime }).
\end{equation*}%
and%
\begin{equation*}
S\left( n\right) \geq S\left( n^{\prime }\right) .
\end{equation*}%
Now, we calculate $S\left( n^{\prime }\right) $. We suppose that $%
V(n^{\prime })=2s$. It is easy to see that 
\begin{equation*}
\alpha _{2m}=2^{1}+2^{3}+\cdots +2^{2m-1}=\frac{2^{2m+1}-2}{3}
\end{equation*}%
and%
\begin{equation*}
\alpha _{2m-1}=\left\vert 2^{1}+2^{3}+\cdots +2^{2m-3}-2^{2m-1}\right\vert =%
\frac{2^{2m}+2}{3}.
\end{equation*}%
Then we can write%
\begin{eqnarray*}
S\left( n^{\prime }\right) &=&\sum\limits_{m=1}^{s}\frac{\alpha _{2m}}{%
2^{2m+1}}+\sum\limits_{m=1}^{s}\frac{\alpha _{2m-1}}{2^{2m}} \\
&=&\sum\limits_{m=1}^{s}\frac{1}{2^{2m+1}}\frac{2^{2m+1}-2}{3}%
+\sum\limits_{m=1}^{s}\frac{1}{2^{2m}}\frac{2^{2m}+2}{3} \\
&=&\frac{2}{3}s-\frac{2}{3}\sum\limits_{m=1}^{s}\frac{1}{2^{2m+1}}+\frac{2}{3%
}\sum\limits_{m=1}^{s}\frac{1}{2^{2m}} \\
&=&\frac{2}{3}s+\frac{1}{9}\left( 1-\frac{1}{2^{2s}}\right) .
\end{eqnarray*}%
Consequently,%
\begin{equation}
\frac{S\left( n\right) }{V\left( n\right) }\geq \frac{S\left( n^{\prime
}\right) }{V\left( n^{\prime }\right) }=\frac{1}{3}+\frac{1}{18s}\left( 1-%
\frac{1}{2^{2s}}\right) \geq \frac{1}{3}.  \label{low3}
\end{equation}

Since

\begin{equation*}
T(n,m)\cap A(m)=\varnothing
\end{equation*}

we can write 
\begin{eqnarray}
&&\sum\limits_{i=0}^{N-1}\frac{\max \left\{ \alpha _{i}\left( n\right)
,2\alpha _{i}\left( m\right) \right\} }{2^{i+1}}  \label{sandro} \\
&\geqslant &\sum\limits_{i\in A(m)\backslash \left\{ N\right\} }\frac{%
2\alpha _{i}\left( m\right) }{2^{i+1}}+\sum\limits_{i\in T(n,m)}\frac{\alpha
_{i}\left( n\right) }{2^{i+1}}  \notag \\
&\geq &\sum\limits_{i\in A(m)}\frac{2\alpha _{i}\left( m\right) }{2^{i+1}}%
+\sum\limits_{i\in T(n,m)}\frac{\alpha _{i}\left( n\right) }{2^{i+1}}-\frac{%
2\alpha _{N}\left( m\right) }{2^{N+1}}  \notag
\end{eqnarray}

From (\ref{low3}) we have 
\begin{equation}
\sum\limits_{i\in A(m)}\frac{2\alpha _{i}\left( m\right) }{2^{i+1}}=2S\left(
m\right) >\frac{2}{3}V(m).  \notag
\end{equation}

It is easy to see that if $n_{i}\neq n_{i-1}$ then 
\begin{equation*}
\alpha _{i}\left( n\right) \geq 2^{i-1}.
\end{equation*}

\bigskip So, we have 
\begin{equation}
\sum\limits_{i\in T(n,m)}\frac{\alpha _{i}\left( n\right) }{2^{i+1}}\geq 
\frac{1}{4}\left\vert T(n,m)\right\vert .  \label{15}
\end{equation}

Combining (\ref{l4}) and (\ref{l5}) we have%
\begin{equation}
\sum\limits_{i=0}^{N-1}\frac{\max \left\{ \alpha _{i}\left( n\right)
,2\alpha _{i}\left( m\right) \right\} }{2^{i+1}}\geq \frac{2}{3}V(m)+\frac{1%
}{4}\left\vert T(n,m)\right\vert -1.  \label{low1}
\end{equation}

On the other hand, we can write 
\begin{eqnarray}
&&\sum\limits_{i=0}^{N-1}\frac{\max \left\{ \alpha _{i}\left( n\right)
,2\alpha _{i}\left( m\right) \right\} }{2^{i+1}}  \label{low2} \\
&\geqslant &\sum\limits_{i\in A(n)\backslash \left\{ N,N+1\right\} }\frac{%
\alpha _{i}\left( n\right) }{2^{i+1}}+\sum\limits_{i\in T(m,n)}\frac{2\alpha
_{i}\left( m\right) }{2^{i+1}}  \notag \\
&&\frac{1}{3}V\left( n\right) +\frac{1}{2}\left\vert T\left( m,n\right)
\right\vert -\frac{\alpha _{N}\left( N\right) }{2^{N+1}}-\frac{\alpha
_{N+1}\left( N\right) }{2^{N+2}}  \notag \\
&&  \notag \\
&\geq &\frac{1}{3}V\left( n\right) +\frac{1}{2}\left\vert T\left( m,n\right)
\right\vert -1.  \notag
\end{eqnarray}

Combining (\ref{low1}) and (\ref{low2}) we have 
\begin{eqnarray}
&&\max (\frac{1}{2}\left\vert T(n,m)\right\vert +\frac{1}{3}V(n)-\frac{3}{2},%
\frac{1}{4}\left\vert T(n,m)\right\vert +\frac{2}{3}V(m))  \label{lowmain} \\
&\leqslant &\sum\limits_{i=0}^{N-1}\frac{\max \left\{ \alpha _{i}\left(
n\right) ,2\alpha _{i}\left( m\right) \right\} }{2^{i+1}}.  \notag
\end{eqnarray}

Now, we prove upper estimation. First, we prove 
\begin{eqnarray}
&&\sum\limits_{i=0}^{N-1}\frac{\max \left\{ \alpha _{i}\left( n\right)
,2\alpha _{i}\left( m\right) \right\} }{2^{i+1}}  \label{upper1} \\
&\leq &2\sum\limits_{i\in A(m)\cup T(n,m)}\frac{\max \left\{ \alpha
_{i}\left( n\right) ,2\alpha _{i}\left( m\right) \right\} }{2^{i+1}}.  \notag
\end{eqnarray}%
Suppose that $A(m)\cup T(n,m)=\left\{ r_{1<}r_{2}......<r_{s}\right\} .$
Then 
\begin{eqnarray*}
&&\sum\limits_{i=0}^{N-1}\frac{\max \left\{ \alpha _{i}\left( n\right)
,2\alpha _{i}\left( m\right) \right\} }{2^{i+1}} \\
&=&\sum_{i=1}^{s-1}\sum_{j=r_{i}}^{r_{i+1}-1}\frac{\max \left\{ \alpha
_{j}\left( n\right) ,2\alpha _{j}\left( m\right) \right\} }{2^{j+1}}.
\end{eqnarray*}%
Let $n_{i}=n_{i+1}$ for some $i$. Then it is easy to see that 
\begin{eqnarray*}
\alpha _{i+1}\left( n\right) &=&\left\vert
2^{i+1}n_{i+1}-2^{i}n_{i}..-2^{0}n_{0}\right\vert \\
&=&\left\vert 2^{i}n_{i}-2^{i-1}n_{i-1}..-2^{0}n_{0}\right\vert \\
&=&\alpha _{i}\left( n\right) .
\end{eqnarray*}

Consequently, 
\begin{eqnarray*}
&&\sum_{j=r_{i}}^{r_{i+1}-1}\frac{\max \left\{ \alpha _{j}\left( n\right)
,2\alpha _{j}\left( m\right) \right\} }{2^{j+1}} \\
&\leq &\frac{\max \left\{ \alpha _{r_{i}}\left( n\right) ,2\alpha
_{r_{i}}\left( m\right) \right\} }{2^{r_{i}}}.
\end{eqnarray*}

Hence (\ref{upper1}) is proved.

Since%
\begin{equation*}
\alpha _{i}\left( n\right) \leq 2^{i}\text{ \ }\left( i\in \mathbb{N}\right)
\end{equation*}%
and%
\begin{equation*}
\alpha _{i}\left( n\right) \leq 2^{i-1}\text{ \ }\left( i\notin A\left(
n\right) \right)
\end{equation*}%
from (\ref{upper1}) we have%
\begin{eqnarray}
&&\sum\limits_{i=0}^{N-1}\frac{\max \left\{ \alpha _{i}\left( n\right)
,2\alpha _{i}\left( m\right) \right\} }{2^{i+1}}  \label{u1} \\
&\leq &2\sum\limits_{i\in A(m)}\frac{\max \left\{ \alpha _{i}\left( n\right)
,2\alpha _{i}\left( m\right) \right\} }{2^{i+1}}  \notag \\
&&+2\sum\limits_{i\in T(n,m)}\frac{\max \left\{ \alpha _{i}\left( n\right)
,2\alpha _{i}\left( m\right) \right\} }{2^{i+1}}  \notag \\
&\leq &2\left\vert A\left( m\right) \right\vert +\left\vert T\left(
n,m\right) \right\vert .  \notag
\end{eqnarray}

It is easy to see that

\begin{eqnarray}
&&\frac{\left\vert 2^{N+1}-n-2m+2t_{N+1}\left( n-2^{N}\right) \right\vert }{%
2^{N+1}}  \label{u2} \\
&&+\frac{\left\vert n-2m-2t_{N+1}\left( n-2^{N}\right) \right\vert }{2^{N+1}}%
\leq 2.  \notag
\end{eqnarray}%
From (\ref{u1}) and (\ref{u2})%
\begin{equation}
L_{n}^{\left( t\right) }\leq 2V(m)+\left\vert T(n,m)\right\vert +2.
\label{u}
\end{equation}%
Combining (\ref{low1}), (\ref{low2}) and (\ref{u}) we complete the proof of
Theorem \ref{TS}.

\section{Uniformly boundedness of \ conjugate Fej\'er means}

The first result with respect to the a.e. convergence of the Walsh-Fejér
means $\sigma _{n}f$ is due to Fine \cite{Fi}. Later, Schipp \cite{Sc}
showed that the maximal operator $\sigma ^{\ast
}f:=\sup\limits_{n}\left\vert \sigma _{n}f\right\vert $ is of weak type (1,
1), from which the a. e. convergence follows by standard argument. Schipp
result implies by interpolation also the boundedness of $\sigma ^{\ast
}:L_{p}\rightarrow L_{p}\,\left( 1<p\leq \infty \right) $. This fails to
hold for $p=1$ but Fujii \cite{Fu} proved that $\sigma ^{\ast }$ is bounded
from the dyadic Hardy space $H_{1}$ to the space $L_{1}$ (see also Simon 
\cite{Si1}). Fujii's theorem was extended by Weisz \cite{WeAM1}. Namely, he
proved that the maximal operator $\sigma ^{\ast }f$ and the conjugate
maximal operator $\widetilde{\sigma }_{\ast }^{\left( t\right) }$ are
bounded from the martingale Hardy space $H_{p}$ to the space $L_{p}$ for $%
p>1/2$. Simon \cite{Si2} gave a counterexample, which shows that this
boundedness does not hold for $0<p<1/2.\,$ In \cite{GoBud} (see also \cite%
{GoSin}, \cite{Go}) the first author proved that the maximal operator $%
\widetilde{\sigma }_{\ast }^{\left( t\right) }$ is not bounded from the
Hardy space $H_{1/2}$ to the space $L_{1/2}.$

Weisz \cite{WeAM1}, \cite{WeAM2} considered the norm convergence of
conjugate Fejér means. In particular, the following is true

\begin{weisz}[Weisz]
If $t\in \mathbb{I}$ then 
\begin{equation*}
\left\Vert \widetilde{\sigma }_{n}^{\left( t\right) }f\right\Vert
_{H_{p}}\leq c_{p}\left\Vert f\right\Vert _{H_{p},\,\,\,}\left( f\in
H_{p}\right) ,
\end{equation*}%
whenever $p>1/2.$
\end{weisz}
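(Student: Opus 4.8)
The plan is to strip the conjugation off and reduce to the ordinary Fej\'er means. Comparing Walsh--Fourier coefficients one sees that $\widehat{\widetilde f^{(t)}}(k)=\beta_k(t)\,\widehat f(k)$ for every $k\in\mathbb N$, whence $\widetilde S_n^{(t)}f=S_n\big(\widetilde f^{(t)}\big)$ and consequently $\widetilde\sigma_n^{(t)}f=\sigma_n\big(\widetilde f^{(t)}\big)$. Since conjugation is an isometry of $H_p$, i.e.\ $\big\|\widetilde f^{(t)}\big\|_{H_p}=\|f\|_{H_p}$, the statement becomes the $t$-free inequality
\[
\big\|\sigma_n g\big\|_{H_p}\le c_p\,\|g\|_{H_p}\qquad(g\in H_p,\ n\in\mathbb P,\ p>1/2).
\]

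For $p>1$ this is immediate: $H_p=L_p$ with equivalent norms (Doob), $\sigma_n g=g\ast K_n$, and Toledo's uniform bound $\sup_n\|K_n\|_1=17/15$ together with Young's inequality give $\|\sigma_n g\|_p\le\tfrac{17}{15}\|g\|_p$. So the whole substance lies in the range $1/2<p\le1$, and there I would run the standard atomic argument.

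Write $g=\sum_k\lambda_k a_k$ using the atomic decomposition of the dyadic martingale Hardy space (\cite{Webook1,Webook2}), with $p$-atoms $a_k$ and $\sum_k|\lambda_k|^p\lesssim\|g\|_{H_p}^p$. Since $\|\cdot\|_{H_p}^p$ is subadditive for $p\le1$ and $\sigma_n$ is linear and continuous, it suffices to bound $\|\sigma_n a\|_{H_p}$ uniformly over all $p$-atoms $a$ and all $n$. Fix $a$; by translation invariance we may assume $a$ is supported on $I_K$, with $\|a\|_\infty\le2^{K/p}$ and $\int_G a\,d\mu=0$. The vanishing mean on the dyadic interval $I_K$ forces $E_m a=0$ for all $m\le K$; in particular $\sigma_n a=0$ unless $n>2^K$, so we may take $2^N\le n<2^{N+1}$ with $N\ge K$, and then, since $E_m$ and $\sigma_n$ are both Walsh multipliers, $E_m\sigma_n a=\sigma_n E_m a=0$ for $m\le K$ as well. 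Split $\|\sigma_n a\|_{H_p}^p=\int_{I_K}((\sigma_n a)^\ast)^p\,d\mu+\int_{G\setminus I_K}((\sigma_n a)^\ast)^p\,d\mu$. On $I_K$ use H\"older with exponent $2/p$ together with Doob's $L_2$-inequality and the $L_2$-contractivity of $\sigma_n$:
\[
\int_{I_K}\big((\sigma_n a)^\ast\big)^p\,d\mu\le\mu(I_K)^{1-p/2}\,\|(\sigma_n a)^\ast\|_2^p\lesssim\mu(I_K)^{1-p/2}\big(2^{K/p}\mu(I_K)^{1/2}\big)^p=1 ,
\]
a bound independent of $K$ and $n$.

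The term over $G\setminus I_K$ is the heart of the matter, and it is here that the restriction $p>1/2$ is used. For $x\in I_j\setminus I_{j+1}$ with $j<K$ one has $E_m\sigma_n a(x)=\int_{I_K}a(s)(E_mK_n)(x+s)\,d\mu(s)$, which vanishes for $m\le K$, so that $(\sigma_n a)^\ast(x)=\max_{K<m\le N+1}|E_m\sigma_n a(x)|\le2^{K/p}\max_{K<m\le N+1}\int_{x+I_K}|E_mK_n|\,d\mu$, a quantity constant on each $I_K$-coset. Hence $\int_{G\setminus I_K}((\sigma_n a)^\ast)^p\,d\mu=\sum_{j=0}^{K-1}\int_{I_j\setminus I_{j+1}}((\sigma_n a)^\ast)^p\,d\mu$ is controlled once one has sharp estimates for the conditional expectations $E_mK_n$ of the Walsh--Fej\'er kernel on the dyadic rings $I_j\setminus I_{j+1}$; these come from the explicit formulas \eqref{dir}--\eqref{dir2} for $D_{2^k}$ and $D_n$ and the attendant identities $|D_n|=\alpha_j(n)$, exactly as in the proofs of the theorems of Fuji\'i and Weisz on $\sigma^\ast:H_p\to L_p$. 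Summing the $p$-th powers over $j$ then produces a geometric-type series over the dyadic rings which converges precisely when $p>1/2$; the index $1/2$ is sharp, since Simon's counterexample for the (conjugate) maximal operator already shows that such an estimate must fail for $0<p\le1/2$. I expect this kernel estimate on the dyadic rings to be the one genuinely delicate point — everything preceding it is routine bookkeeping with the coefficient identity, the $H_p$-isometry, Doob's inequality and the atomic decomposition.
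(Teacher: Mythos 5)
Theorem A is quoted in this paper from Weisz \cite{WeAM1, WeAM2} without proof, so you are in effect reconstructing Weisz's theorem, not a proof given here. Your reduction step is correct and is the natural one: the coefficient identity $\widehat{\widetilde f^{(t)}}(k)=\beta_k(t)\widehat f(k)$ does give $\widetilde S_n^{(t)}f=S_n\bigl(\widetilde f^{(t)}\bigr)$, hence $\widetilde \sigma_n^{(t)}f=\sigma_n\bigl(\widetilde f^{(t)}\bigr)$, and together with $\bigl\Vert \widetilde f^{(t)}\bigr\Vert_{H_p}=\Vert f\Vert_{H_p}$ the statement becomes the $t$-free bound $\Vert\sigma_n g\Vert_{H_p}\le c_p\Vert g\Vert_{H_p}$, $p>1/2$. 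The case $p>1$ and the on-support estimate over $I_K$ (H\"older, Doob, $L_2$-contractivity of $\sigma_n$) are fine.

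The gap is that the remaining step, which is the entire analytic content of the theorem, is only asserted. What you must prove, uniformly in $n$ and in the atom, is an estimate of the form $\sum_{j=0}^{K-1}2^{-j}\bigl(2^{K/p}\max_{K<m\le N+1}\sup_{x\in I_j\setminus I_{j+1}}\int_{x+I_K}\vert E_mK_n\vert \,d\mu\bigr)^{p}\lesssim 1$, and this is exactly where $p>1/2$ enters; writing ``exactly as in the proofs of Fujii and Weisz'' and ``a geometric-type series which converges precisely when $p>1/2$'' is circular, since those kernel estimates \emph{are} the theorem. The needed bounds (of G\'at--Weisz type, controlling $n K_n$ by double dyadic sums of the form $2^{i+s}$ supported on sets of measure about $2^{-s}$, plus the extra layer coming from the intermediate conditional expectations $E_mK_n=K_n\ast D_{2^m}$ for $K<m\le N$, not just $K_n$ itself) require a genuine argument that your outline does not contain, and nothing in your text shows why the resulting sum over the rings fails for $p\le 1/2$ but converges for $p>1/2$. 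Two further points: uniform boundedness on atoms alone does not yield $H_p$-boundedness; you need Weisz's sufficient-condition theorem (sublinear operator, bounded on $L_2$, off-support atom estimate), so invoke it rather than ``subadditivity plus linearity''. Finally, if you are willing to take Weisz's maximal theorem $\Vert\sigma^{\ast}f\Vert_p\le c_p\Vert f\Vert_{H_p}$ ($p>1/2$) as known (the paper does quote it), then the identity $E_m\sigma_n f=\frac{2^m}{n}\sigma_{2^m}f+\bigl(1-\frac{2^m}{n}\bigr)E_mf$ for $2^m\le n$ gives $(\sigma_nf)^{\ast}\le\sigma^{\ast}f+f^{\ast}$, and Theorem A follows at once from your reduction --- a much shorter route; but without quoting that result, your proposal is missing its core.
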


Since 
\begin{equation*}
\left\Vert f\right\Vert _{H_{1}}\lesssim 1+E\left( \left\vert f\right\vert
\log ^{+}\left\vert f\right\vert \right)
\end{equation*}%
Theorem A imply that the following is true.

\begin{weisz2}
Let $f\in L\log L$ and $t\in \mathbb{I}.$ Then 
\begin{equation}
E\left\vert \widetilde{\sigma }_{n}^{\left( t\right) }f\right\vert \lesssim
1+E\left( \left\vert f\right\vert \log ^{+}\left\vert f\right\vert \right) .
\label{llogl}
\end{equation}
\end{weisz2}

On the othar hand, for $t=0$ we have following estimation.

\begin{weisz3}
Let $f\in L_{1}.$ Then 
\begin{equation}
E\left\vert \widetilde{\sigma }_{n}^{\left( t\right) }f\right\vert
=E\left\vert \sigma _{n}f\right\vert \lesssim E\left\vert f\right\vert .
\label{l}
\end{equation}
\end{weisz3}

In this paper we will find necessary and sufficient condition on $t$ for
which the estimation (\ref{l}) holds for conjugate Fej\'er means. We also
prove that for dyadic irrational $t$, $L\log L$ is maximal Orlicz space for
which the estimation (\ref{llogl}) is valid.

\begin{theorem}
\label{bound}Let $t\in \mathbb{Q}$ and $f\in L_{1}$. Then%
\begin{equation*}
E\left\vert \widetilde{\sigma }_{n}^{\left( t\right) }f\right\vert \lesssim
E\left\vert f\right\vert .
\end{equation*}
\end{theorem}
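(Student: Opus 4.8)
The plan is to reduce the $L_1$-boundedness of $\widetilde\sigma_n^{(t)}$ for dyadic rational $t$ to a uniform $L_1$-bound on the conjugate Fej\'er kernels $\widetilde K_n^{(t)}$, and then to estimate these kernels by splitting the convolution identity for $\widetilde D_k^{(t)}$ in the same spirit as the computation of $L_n^{(t)}$ above. First I would write $\widetilde\sigma_n^{(t)}f = f\ast \widetilde K_n^{(t)}$ with $\widetilde K_n^{(t)} := \frac1n\sum_{k=0}^{n-1}\widetilde D_k^{(t)}$, so that the claim $E|\widetilde\sigma_n^{(t)}f|\lesssim E|f|$ follows at once from $\sup_n\|\widetilde K_n^{(t)}\|_1 \lesssim C(t)$ for each fixed $t\in\mathbb{Q}$. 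The crucial structural fact is that $t\in\mathbb{Q}$ means the dyadic expansion of $t$ terminates: $t_j=0$ for all $j\ge s$, for some $s=s(t)$. Hence for every $k$ with $2^N\le k<2^{N+1}$ and $N\ge s$, the auxiliary index $m=m(k):=\sum_{i=0}^{N-1}t_{i+1}2^i$ is the \emph{fixed} integer $m_0:=\sum_{i=0}^{s-2}t_{i+1}2^i<2^{s-1}$, and $t_{N+1}=0$, so the formula derived in the excerpt collapses to
\begin{equation*}
\widetilde D_k^{(t)} = D_k - 2 w_{m_0} D_{m_0}\qquad (k\ge 2^s).
\end{equation*}

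Next I would exploit this to compare $\widetilde K_n^{(t)}$ with the ordinary Fej\'er kernel. For $n> 2^s$, split $\widetilde K_n^{(t)} = \frac1n\sum_{k=0}^{2^s-1}\widetilde D_k^{(t)} + \frac1n\sum_{k=2^s}^{n-1}\bigl(D_k - 2w_{m_0}D_{m_0}\bigr)$. The first sum is a fixed finite combination of functions $\widetilde D_k^{(t)}$, $k<2^s$, hence has $L_1$-norm bounded by a constant depending only on $t$, and dividing by $n$ only helps; the term $-\frac{2(n-2^s)}{n}w_{m_0}D_{m_0}$ has $L_1$-norm at most $2\|D_{m_0}\|_1 = 2L_{m_0}\le 2V(m_0)$, again a constant depending only on $t$. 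The main term is $\frac1n\sum_{k=2^s}^{n-1}D_k = \sigma_n f$'s kernel up to the missing initial block: precisely $\frac1n\sum_{k=0}^{n-1}D_k - \frac1n\sum_{k=0}^{2^s-1}D_k = K_n - \frac1n\sum_{k=0}^{2^s-1}D_k$, where $K_n$ is the Walsh--Fej\'er kernel. By Toledo's theorem quoted in the excerpt, $\sup_n\|K_n\|_1 = \tfrac{17}{15}$, and the subtracted finite sum again contributes an $O_t(1)$ term. Collecting these estimates gives $\sup_{n}\|\widetilde K_n^{(t)}\|_1 \le \tfrac{17}{15} + C(t)$, which is exactly what is needed.

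The one subtlety I anticipate as the main obstacle is the bookkeeping of the finite ``head'' $k<2^s$: on that range $m(k)$ varies with $k$ and $t_{N+1}$ can be nonzero, so $\widetilde D_k^{(t)}$ genuinely is the full three-term expression $D_k - 2w_{m(k)}D_{m(k)} - 2t_{N+1}(D_k-D_{2^N})$, and one must check that each of these finitely many kernels is integrable — which it is, since each is a finite linear combination of Walsh--Dirichlet kernels $D_j$ with $j\le 2^s$, each satisfying $\|D_j\|_1 = L_j \le V(j) < \infty$ by (\ref{v}) or Theorem MTK. Because $s=s(t)$ is fixed, the number of such terms and the sizes of the indices involved are all controlled by $t$ alone, so the constant in $E|\widetilde\sigma_n^{(t)}f|\lesssim E|f|$ is allowed to depend on $t$ (as in Theorem C, where $t=0$ gives the absolute constant $\tfrac{17}{15}$). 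A clean way to present the head is to note $\|\widetilde D_k^{(t)}\|_1 = L_k^{(t)} \le 2V(m(k)) + |T(k,m(k))| + 2 \le 2V(m_0') + 2^s + 2$ for all $k<2^s$ by Theorem \ref{TS}, so $\frac1n\sum_{k<2^s}\|\widetilde D_k^{(t)}\|_1 \le \frac{2^s}{n}\bigl(2\cdot 2^s + 2^s + 2\bigr) = O_t(1/n) = O_t(1)$. Assembling the three pieces completes the proof.
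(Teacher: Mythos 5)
Your proposal is correct, but it takes a genuinely different route from the paper's. You argue entirely at the kernel level: writing $\widetilde{\sigma}_n^{(t)}f=f\ast\widetilde{K}_n^{(t)}$ with $\widetilde{K}_n^{(t)}=\frac1n\sum_{k=0}^{n-1}\widetilde{D}_k^{(t)}$, and observing that since the dyadic expansion of $t\in\mathbb{Q}$ terminates at some $s=s(t)$, the conjugate Dirichlet kernel stabilizes, $\widetilde{D}_k^{(t)}=D_k-2w_{m_0}D_{m_0}$ for $k\geq 2^{s}$, so $\widetilde{K}_n^{(t)}$ differs from the ordinary Fej\'er kernel $K_n$ by the single correction $-\frac{2(n-2^{s})}{n}w_{m_0}D_{m_0}$ plus a head of at most $2^{s}$ integrable kernels; the uniform bound $\sup_n\|K_n\|_1=\frac{17}{15}$ (any bound $\|K_n\|_1\lesssim 1$ suffices) then gives $\sup_n\|\widetilde{K}_n^{(t)}\|_1\leq\frac{17}{15}+C(t)$, and Young's inequality on $G$ finishes. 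The paper instead works at the operator level: it splits $\widetilde{\sigma}_n^{(t)}f$ into six pieces $J_1,\dots,J_6$ over dyadic blocks, disposes of all but $J_1$ using $E|E_mf|\leq E|f|$ and $E|\sigma_nf|\lesssim E|f|$, and treats $J_1$ by an Abel summation $\widetilde{E}_m^{(t)}f=(1-2t_{m-1})E_{m-1}f-2\sum_{l=0}^{m-2}(t_l-t_{l+1})E_lf$, where dyadic rationality enters through $\sum_l|t_l-t_{l+1}|<\infty$. Both proofs rest on the same two ingredients (termination of the expansion of $t$ and $L_1$-boundedness of ordinary Fej\'er means); yours is more direct and gives an explicit kernel-norm bound, while the paper's decomposition and the operators $\widetilde{E}_m^{(t)}$ are set up so that they can be reused in the proof of Theorem \ref{unbound}. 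Two minor points: in the head estimate your appeal to Theorem \ref{TS} involves an undefined quantity $m_0'$ — it is cleaner (and enough) to note that each $\widetilde{D}_k^{(t)}$ with $k<2^{s}$ is a finite linear combination of Dirichlet kernels $D_j$, $j\leq 2^{s}$, each of finite $L_1$-norm — and your implied constant depends on $t$ (through $s$ and $V(m_0)$), which is consistent with the paper, whose constant likewise depends on $t$ through $\sum_l|t_l-t_{l+1}|$.
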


\begin{theorem}
\label{unbound}Let $t\notin \mathbb{Q}$ and $Q\left( L\right) $ be an Orlicz
space for which%
\begin{equation*}
Q\left( L\right) \nsubseteqq L\log L.
\end{equation*}%
Then%
\begin{equation*}
\sup\limits_{A}\left\Vert \widetilde{\sigma }_{2^{A}}^{\left( t\right)
}\right\Vert _{Q\left( L\right) \rightarrow L_{1}}=\infty .
\end{equation*}
\end{theorem}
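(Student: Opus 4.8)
The plan is to exhibit, for each admissible Orlicz space $Q(L)$ with $Q(L)\nsubseteqq L\log L$, a sequence of functions whose $Q(L)$-norms stay bounded but whose images under $\widetilde{\sigma}_{2^{A}}^{(t)}$ have $L_1$-norms tending to infinity. The natural starting point is the formula $\widetilde{\sigma}_{2^{A}}^{(t)}f=f\ast\widetilde{K}_{2^{A}}^{(t)}$, where $\widetilde{K}_{2^{A}}^{(t)}=\frac{1}{2^{A}}\sum_{k=0}^{2^{A}-1}\widetilde{D}_{k}^{(t)}$ is the conjugate Fej\'er kernel. By duality, $\|\widetilde{\sigma}_{2^{A}}^{(t)}\|_{Q(L)\to L_1}$ is governed by how large $\|\widetilde{K}_{2^{A}}^{(t)}\|$ can be in the sense dual to $Q(L)$; since $Q(L)\nsubseteqq L\log L$, the dual space is strictly smaller than $\exp L$, so it suffices to show that $\widetilde{K}_{2^{A}}^{(t)}$ has an exponentially concentrated piece — roughly, that on a set of measure $\sim 2^{-j}$ the kernel is $\gtrsim j$ (or that $L_n^{(t)}$-type quantities grow like a power of $\log$ rather than $\log$), which is exactly the phenomenon quantified by the Lebesgue-constant bounds of Theorem~\ref{TS}.

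First I would use the fact that $t\notin\mathbb{Q}$: then the dyadic digits $(t_j)$ do not terminate in zeros, so for infinitely many $N$ the truncation $m=m(N)=\sum_{i=0}^{N-1}t_{i+1}2^{i}$ has variation $V(m)$ as large as we please; more precisely one can choose a subsequence of scales $N$ along which $V(m(N))\to\infty$ at a controlled rate. Feeding this into the lower bound of Theorem~\ref{TS}, $L_{n}^{(t)}\gtrsim V(m)$ for appropriate $n$ with $2^{N}\le n<2^{N+1}$, and then averaging the kernels $\widetilde{D}_{k}^{(t)}$ over $k<2^{A}$ (choosing $A$ adapted to the chosen scales), one shows that $\|\widetilde{K}_{2^{A}}^{(t)}\|_1\to\infty$ and, crucially, that this $L_1$-mass is carried on small dyadic intervals where the kernel is correspondingly tall. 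Concretely, on each $I_j\setminus I_{j+1}$ the relevant Dirichlet kernels are constant with value $\sim\alpha_j$, so the kernel's distribution function is explicit and one reads off that $\widetilde{K}_{2^{A}}^{(t)}$ is, up to constants, of the size of a function like $\sum_j c_j 2^{j}\mathbf 1_{I_j\setminus I_{j+1}}$ with $\sum_j c_j\to\infty$; the point is that such a kernel pairs badly against $L\log L$ precisely when $\sum_j c_j=\infty$.

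The second step is the soft-analysis conversion: given that $Q(L)\nsubseteqq L\log L$, there is a function $g\ge 0$ with $\|g\|_{Q(L)}\le 1$ but $E(g\log^+ g)=\infty$; equivalently, by a standard Orlicz-duality argument, the dual Orlicz space $\widetilde Q(L)$ does not contain $\exp L$, so one can find unit-norm-in-$Q(L)$ functions concentrated on $I_j\setminus I_{j+1}$ with amplitude growing faster than $j$. Taking $f=f_A$ to be a suitable finite sum (or a single bump on the scale selected above), the convolution $\widetilde{\sigma}_{2^{A}}^{(t)}f_A$ reproduces a constant multiple of the tall part of the kernel on that scale, giving $\|\widetilde{\sigma}_{2^{A}}^{(t)}f_A\|_1\gtrsim$ (something $\to\infty$) while $\|f_A\|_{Q(L)}=O(1)$; letting $A\to\infty$ along the chosen subsequence finishes the proof. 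The main obstacle, and where I would spend the most care, is the matching between the arithmetic input (the digits of the irrational $t$ force $V(m)$ large along a subsequence, but one must pick $n$, hence $A=$ that $N$, so that the \emph{Fej\'er-averaged} kernel — not merely a single $\widetilde{D}_n^{(t)}$ — inherits the large, exponentially-concentrated lower bound) and the Orlicz-duality input (producing test functions that are simultaneously bounded in $Q(L)$ and supported exactly where the kernel is tall). Bridging these requires a careful bookkeeping of which dyadic blocks $I_j\setminus I_{j+1}$ carry the mass of $\widetilde{K}_{2^{A}}^{(t)}$, using the block sum $S(m)$, $V(m)$ and $T(n,m)$ machinery already developed for Theorem~\ref{TS}.
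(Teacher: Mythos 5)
Your plan does not close, and the place where it fails is exactly the ``matching'' you flag at the end. Two concrete problems. First, Theorem~\ref{TS} controls a single conjugate Dirichlet kernel $\widetilde{D}_{n}^{\left( t\right) }$, and largeness of $L_{n}^{\left( t\right) }$ does not pass to the Fej\'er average $\widetilde{K}_{2^{A}}^{\left( t\right) }:=2^{-A}\sum_{k<2^{A}}\widetilde{D}_{k}^{\left( t\right) }$: averaging can destroy such lower bounds (the ordinary Walsh--Fej\'er kernels have $\sup_{n}\left\Vert K_{n}\right\Vert _{1}=17/15$ even though $L_{n}\sim V\left( n\right) $ is unbounded), so the statement that the averaged kernel ``inherits'' the bound is precisely a no-cancellation claim that has to be proved, and proving it needs the explicit block structure of the digits of $t$, not merely $V\left( m\left( N\right) \right) \rightarrow \infty $. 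The paper never invokes Theorem~\ref{TS} here; it uses the operator decomposition $\widetilde{\sigma }_{2^{2p_{A}+1}}^{\left( t\right) }f_{A}=F_{1}f_{A}+F_{2}f_{A}+F_{3}f_{A}$ of (\ref{f1-f3}), shows $F_{2},F_{3}$ are uniformly $L_{1}$-bounded, and computes $\widetilde{E}_{m}^{\left( t\right) }f_{A}$ exactly on the sets $\widetilde{\Delta }_{i}$, where it equals the constant $-\left( 2^{2i}-4\right) /3$ for every $m>p_{A}$, so the weighted average over $m$ cannot cancel. Moreover, there is no ``controlled rate'': for a general dyadic irrational $t$ the number of digit changes below $N$ may grow arbitrarily slowly compared with $N$.

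Second, and more fundamentally, your test functions are calibrated to the scale: bumps at the origin or on $I_{j}\backslash I_{j+1}$ with height about $Q^{-1}\left( 2^{j}\right) $. For such data the best lower bound you can extract at index $2^{A}$ is of order $\left( \text{number of digit changes of }t\text{ below }A\right) \cdot Q^{-1}\left( 2^{A}\right) /2^{A}$, and this need not blow up: take $Q$ equivalent to $u\sqrt{\log u}$ for large $u$ (so $Q\left( L\right) \nsubseteqq L\log L$) and $t$ whose block boundaries sit at positions growing exponentially, so that the number of changes below $A$ is about $\log A$ while $Q^{-1}\left( 2^{A}\right) /2^{A}\approx A^{-1/2}$; the product tends to $0$ for all $A$. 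The hypothesis $Q\left( L\right) \nsubseteqq L\log L$ only gives $\overline{\lim }_{u}\,u\log u/Q\left( u\right) =\infty $, so the gain must be comparable to the logarithm of the height of the test function itself. This forces the paper's decisive idea, which your sketch is missing: decouple the height from the scale. With $A$ blocks ending at $p_{A}$, one takes $f_{A}=2^{2A}\mathbb{I}_{\Delta _{A}}$ where $\mu \left( \Delta _{A}\right) =2^{-2A}$, zeros being imposed only at the $2A$ block-boundary coordinates $q_{1},p_{1},\dots ,q_{A},p_{A}$; then $\left\Vert f_{A}\right\Vert _{Q\left( L\right) }\lesssim 1+Q\left( 2^{2A}\right) /2^{2A}$, while the telescoping of $\sum_{l}\left( t_{l}-t_{l+1}\right) E_{l}f_{A}$ onto the block boundaries gives $E\left\vert \widetilde{\sigma }_{2^{2p_{A}+1}}^{\left( t\right) }f_{A}\right\vert \gtrsim A$, so the quotient is $A2^{2A}/Q\left( 2^{2A}\right) $, i.e.\ exactly $u\log u/Q\left( u\right) $ at $u=2^{2A}$, which blows up along the subsequence furnished by the hypothesis. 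Without this decoupling (height $2^{2A}$ tied to the number of blocks and support positioned by the digits of $t$), the soft duality facts you invoke cannot produce the required sequence of test functions.
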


\begin{proof}[Proof of Theorem \protect\ref{bound}]
Let $2^{A}\leq n<2^{A+1}$. Then we can write%
\begin{equation}
\widetilde{\sigma }_{n}^{\left( t\right) }f=\frac{1}{n}\sum\limits_{m=1}^{A}%
\sum\limits_{k=2^{m-1}}^{2^{m}-1}\widetilde{S}_{k}^{\left( t\right) }f+\frac{%
1}{n}\sum\limits_{k=2^{A}}^{n-1}\widetilde{S}_{k}^{\left( t\right) }f.
\label{fejer}
\end{equation}

Since for $2^{m-1}\leq k<2^{m}$ \ $\left( E_{-1}f=0\right) $ 
\begin{eqnarray}
\widetilde{S}_{k}^{\left( t\right) }f &=&\rho _{0}\left( \rho \left(
t\right) \right) \widehat{f}\left( 0\right)
w_{0}+\sum\limits_{l=1}^{m-1}r_{l}\left( \rho \left( t\right) \right) \left(
E_{l}f-E_{l-1}f\right)  \label{S} \\
&&+r_{m}\left( \rho \left( t\right) \right) \left( S_{k}f-E_{m-1}f\right) 
\notag \\
&=&\sum\limits_{l=0}^{m-1}r_{l}\left( \rho \left( t\right) \right) \left(
E_{l}f-E_{l-1}f\right) +r_{m}\left( \rho \left( t\right) \right) \left(
S_{k}f-E_{m-1}f\right)  \notag
\end{eqnarray}%
from (\ref{fejer}) we have%
\begin{eqnarray*}
\widetilde{\sigma }_{n}^{\left( t\right) }f &=&\frac{1}{n}%
\sum\limits_{m=1}^{A}2^{m-1}\sum\limits_{l=0}^{m-1}r_{l}\left( \rho \left(
t\right) \right) \left( E_{l}f-E_{l-1}f\right) \\
&&+\frac{1}{n}\sum\limits_{m=1}^{A}r_{m}\left( \rho \left( t\right) \right)
\sum\limits_{k=2^{m-1}}^{2^{m}-1}\left( S_{k}f-E_{m-1}f\right) \\
&&+\frac{n-2^{A}}{n}\sum\limits_{l=0}^{A}r_{l}\left( \rho \left( t\right)
\right) \left( E_{l}f-E_{l-1}f\right) \\
&&+\frac{r_{A+1}\left( \rho \left( t\right) t\right) }{n}\sum%
\limits_{k=2^{A}}^{n-1}\left( S_{k}f-E_{m-1}f\right) \\
&=&\frac{1}{n}\sum\limits_{m=1}^{A}2^{m-1}\sum\limits_{l=0}^{m-1}r_{l}\left(
\rho \left( t\right) \right) \left( E_{l}f-E_{l-1}f\right)
\end{eqnarray*}%
\begin{eqnarray}
&&+\frac{1}{n}\sum\limits_{m=1}^{A}r_{m}\left( \rho \left( t\right) \right)
\left( 2^{m}\sigma _{2^{m}}f-2^{m-1}\sigma _{2^{m-1}}f\right)  \label{J1-J6}
\\
&&-\frac{1}{n}\sum\limits_{m=1}^{A}r_{m}\left( \rho \left( t\right) \right)
2^{m-1}E_{m-1}f  \notag \\
&&+\frac{n-2^{A}}{n}\sum\limits_{l=0}^{A}r_{l}\left( \rho \left( t\right)
\right) \left( E_{l}f-E_{l-1}f\right)  \notag \\
&&+\frac{r_{A+1}\left( \rho \left( t\right) \right) }{n}\left( n\sigma
_{n}f-2^{A}\sigma _{2^{A}}f\right)  \notag \\
&&-\frac{r_{A+1}\left( \rho \left( t\right) \right) }{n}\left(
n-2^{A}\right) E_{m-1}f  \notag \\
&=&:\sum\limits_{j=1}^{6}J_{j}f.  \notag
\end{eqnarray}%
Since%
\begin{equation}
E\left\vert E_{m}f\right\vert \leq E\left\vert f\right\vert  \label{e1}
\end{equation}%
and%
\begin{equation}
E\left\vert \sigma _{n}f\right\vert \lesssim E\left\vert f\right\vert
\label{e2}
\end{equation}%
we can write%
\begin{equation}
E\left\vert J_{j}f\right\vert \lesssim E\left\vert f\right\vert ,j=2,3,5,6.
\label{j2-6}
\end{equation}

For $J_{1}f$ we can write%
\begin{equation*}
J_{1}f=\frac{1}{n}\sum\limits_{m=1}^{A}2^{m-1}\widetilde{E}_{m}^{\left(
t\right) }f,
\end{equation*}%
where%
\begin{equation}
\widetilde{E}_{m}^{\left( t\right) }f:=\sum\limits_{l=0}^{m-1}r_{l}\left(
\rho \left( t\right) \right) \left( E_{l}f-E_{l-1}f\right) .  \label{E}
\end{equation}%
Since $E_{l}f=f\ast D_{2^{l}}$ we have%
\begin{equation}
\widetilde{E}_{m}^{\left( t\right) }f=f\ast
\sum\limits_{l=0}^{m-1}r_{l}\left( \rho \left( t\right) \right) \left(
D_{2^{l}}-D_{2^{l-1}}\right) :=f\ast \widetilde{D}_{2m}^{\left( t\right) },
\label{c1}
\end{equation}%
where%
\begin{equation*}
\widetilde{D}_{2^{m}}^{\left( t\right) }:=\sum\limits_{l=0}^{m-1}r_{l}\left(
\rho \left( t\right) \right) \left( D_{2^{l}}-D_{2^{l-1}}\right) .
\end{equation*}%
It is easy to see that $r_{l}\left( t\right) =\left( -1\right)
^{t_{l}}=1-2t_{l}.$ Then for $\widetilde{D}_{2^{l}}^{\left( t\right) }$ we
can write%
\begin{eqnarray*}
\widetilde{D}_{2^{l}}^{\left( t\right) } &=&\sum\limits_{l=0}^{m-1}\left(
1-2t_{l}\right) \left( D_{2^{l}}-D_{2^{l-1}}\right) \\
&=&D_{2^{m-1}}-2\sum\limits_{l=0}^{m-1}t_{l}\left(
D_{2^{l}}-D_{2^{l-1}}\right) \\
&=&\left( 1-2t_{m-1}\right) D_{2^{m-1}}-2\sum\limits_{l=0}^{m-2}\left(
t_{l}-t_{l+1}\right) D_{2^{l}}.
\end{eqnarray*}%
Consequently,%
\begin{equation}
f\ast \widetilde{D}_{2^{m}}^{\left( t\right) }=\left( 1-2t_{m-1}\right)
E_{m-1}f-2\sum\limits_{l=0}^{m-2}\left( t_{l}-t_{l+1}\right) E_{l}f.
\label{c2}
\end{equation}%
$t\in \mathbb{Q}$ imply that 
\begin{equation*}
\sum\limits_{l=0}^{\infty }\left\vert t_{l}-t_{l+1}\right\vert <\infty .
\end{equation*}%
From (\ref{e1}) we get 
\begin{equation*}
E\left\vert \widetilde{E}_{m}^{\left( t\right) }f\right\vert \lesssim \left(
\left\vert 2t_{m-1}-1\right\vert +\sum\limits_{l=0}^{\infty }\left\vert
t_{l}-t_{l+1}\right\vert \right) E\left\vert f\right\vert \lesssim
E\left\vert f\right\vert .
\end{equation*}%
Consequently, 
\begin{equation}
E\left\vert J_{1}f\right\vert =\frac{1}{n}\sum\limits_{m=1}^{A}2^{m-1}E\left%
\vert \widetilde{E}_{m}^{\left( t\right) }f\right\vert \lesssim E\left\vert
f\right\vert .  \label{J1}
\end{equation}%
Combining (\ref{J1-J6})-(\ref{J1}) we complete the proof of Theorem \ref%
{bound}.
\end{proof}

\begin{proof}[Proof of Theorem \protect\ref{unbound}]
Since $t\notin \mathbb{Q}$ there exists a sequences $\left\{ p_{i}:i\in 
\mathbb{P}\right\} $ and $\left\{ q_{i}:i\in \mathbb{P}\right\} $ such that%
\begin{equation*}
0\leq q_{1}\leq p_{1}<q_{2}\leq p_{2}<\cdots <q_{A}\leq p_{A}<\cdots
\end{equation*}%
and%
\begin{equation*}
t_{j}=\left\{ 
\begin{array}{l}
1,q_{i}\leq j\leq p_{i} \\ 
0,p_{i}<j<q_{i+1}%
\end{array}%
\right. ,i=1,2,....
\end{equation*}%
Set%
\begin{equation*}
\Delta _{A}:=I_{p_{A}+1}\left(
t_{0},...,t_{q_{1}-1},0,t_{q_{1}+1},...,t_{p_{1}-1},0,t_{p_{1}+1},...,t_{q_{A}-1},0,t_{q_{A}+1},...,t_{p_{A}-1},0\right) .
\end{equation*}

Define the function%
\begin{equation*}
f_{A}\left( x\right) :=2^{2A}\mathbb{I}_{\Delta _{A}}\left( x\right) ,
\end{equation*}%
where $\mathbb{I}_{E}$ is characteristic function of the set $E$. It is easy
to see that%
\begin{equation}
\mu \left( \Delta _{A}\right) =\frac{2^{p_{A}+1-2A}}{2^{p_{A}+1}}=\frac{1}{%
2^{2A}}.  \label{measure}
\end{equation}

We can write (see (\ref{J1-J6}) and (\ref{E}))%
\begin{eqnarray}
&&\widetilde{\sigma }_{2^{2p_{A}+1}}^{\left( t\right) }f_{A}  \label{f1-f3}
\\
&=&\frac{1}{2^{2p_{A}+1}}\sum\limits_{m=1}^{2p_{A}+1}2^{m-1}\widetilde{E}%
_{m}^{\left( t\right) }f_{A}  \notag \\
&&+\frac{1}{2^{2p_{A}+1}}\sum\limits_{m=1}^{2p_{A}+1}r_{m}\left( \rho \left(
t\right) \right) \left( 2^{m}\sigma _{2^{m}}f_{A}-2^{m-1}\sigma
_{2^{m-1}}f_{A}\right)  \notag \\
&&-\frac{1}{2^{2p_{A}+1}}\sum\limits_{m=1}^{2p_{A}+1}r_{m}\left( \rho \left(
t\right) \right) 2^{m-1}E_{m-1}f_{A}  \notag \\
&=&:F_{1}f_{A}+F_{2}f_{A}+F_{3}f_{A}.  \notag
\end{eqnarray}

From (\ref{e1}), (\ref{e2}) and (\ref{measure}) we have%
\begin{equation}
E\left\vert F_{j}f_{A}\right\vert \lesssim E\left\vert f_{A}\right\vert
\lesssim 1,j=2,3.  \label{f2-3}
\end{equation}

Set 
\begin{equation*}
\widetilde{\Delta }_{i}:=I_{p_{i}+1}\left(
x_{0},...,x_{q_{1}-1},0,x_{q_{1}+1},...,x_{p_{1}-1},0,x_{p_{1}+1},...,x_{q_{i}-1},0,x_{q_{i}+1},...,x_{p_{i}-1},1\right) .
\end{equation*}%
Suppose that $x\in \widetilde{\Delta }_{i}$ for some $i=1,2,...,A$. Then 
\begin{equation*}
E_{a}f_{A}\left( x\right) =2^{a}\int\limits_{I_{a}\left( x\right)
}f_{A}\left( s\right) d\mu \left( s\right) =0,a>p_{i}.\text{ \ \ }
\end{equation*}%
Therefore, for $m>p_{A}$ we obtain (see \ref{c2})%
\begin{eqnarray}
&&\widetilde{E}_{m}^{\left( t\right) }f_{A}\left( x\right)  \label{s3} \\
&=&-2\sum\limits_{a=0}^{p_{i}}\left( t_{a}-t_{a+1}\right) E_{a}f_{A}\left(
x\right)  \notag \\
&=&\sum\limits_{k=1}^{i}\left[ 2E_{q_{k}-1}f_{A}\left( x\right)
-2E_{p_{k}}f_{A}\left( x\right) \right]  \notag \\
&=&\sum\limits_{k=1}^{i}\left[ 2^{q_{k}+2A}\mu \left( I_{q_{k}-1}\left(
x\right) \cap \Delta _{A}\right) -2^{p_{k}+1+2A}\mu \left( I_{p_{k}}\left(
x\right) \cap \Delta _{A}\right) \right] .  \notag
\end{eqnarray}

it is easy to calculate that%
\begin{eqnarray*}
\mu \left( I_{q_{k}-1}\left( x\right) \cap \Delta _{A}\right) &=&\frac{%
2^{p_{A}-\left( q_{k}-1\right) -2\left( A-k+1\right) }}{2^{p_{A}+1}} \\
&=&2^{-q_{k}-2\left( A-k\right) -2}
\end{eqnarray*}%
and%
\begin{eqnarray*}
\mu \left( I_{p_{k}}\left( x\right) \cap \Delta _{A}\right) &=&\frac{%
2^{p_{A}-p_{k}-\left[ 2\left( A-k\right) +1\right] }}{2^{p_{A}+1}} \\
&=&2^{-p_{k}-2\left( A-k\right) -2}.
\end{eqnarray*}%
Hence, from (\ref{s3}) for $m\geq p_{A}+1$ and $x\in \widetilde{\Delta }%
_{i},i=1,2,...,A$ we have%
\begin{eqnarray}
&&\widetilde{E}_{m}^{\left( t\right) }f_{A}  \label{s4} \\
&=&\sum\limits_{k=1}^{i}\left[ 2^{q_{k}+2A}\cdot 2^{-q_{k}-2\left(
A-k\right) -2}-2^{p_{k}+1+2A}\cdot 2^{-p_{k}-2\left( A-k\right) -2}\right] 
\notag \\
&=&\sum\limits_{k=1}^{i}\left[ 2^{2k-2}-2^{2k-1}\right] =-\frac{2^{2i}-4}{3}.
\notag
\end{eqnarray}

Consequently, for $x\in \widetilde{\Delta }_{i}$ we get 
\begin{eqnarray*}
&&\frac{1}{2^{2p_{A}+1}}\sum\limits_{m=p_{A}+1}^{2p_{A}+1}2^{m-1}\widetilde{E%
}_{m}^{\left( t\right) }f_{A} \\
&=&-\frac{1}{2^{2p_{A}+1}}\sum\limits_{m=p_{A}+2}^{2p_{A}+1}2^{m-1}\frac{%
2^{2i}-4}{3} \\
&=&-\frac{1}{2^{2p_{A}+1}}\left( 2^{2p_{A}+1}-2^{p_{A}+1}\right) \frac{%
2^{2i}-4}{3}.
\end{eqnarray*}

Since%
\begin{eqnarray*}
E\left\vert \widetilde{E}_{m}^{\left( t\right) }f_{A}\right\vert &\leq
&E\left\vert E_{m}f_{A}\right\vert +2\sum\limits_{a=0}^{m-1}E\left\vert
E_{a}f_{A}\right\vert \\
&\lesssim &mE\left\vert f_{A}\right\vert \lesssim m
\end{eqnarray*}

and%
\begin{equation*}
F_{1}f_{A}=\frac{1}{2^{2p_{A}+1}}\sum\limits_{m=1}^{p_{A}+1}2^{m-1}%
\widetilde{E}_{m}^{\left( t\right) }f_{A}+\frac{1}{2^{2p_{A}+1}}%
\sum\limits_{m=p_{A}+2}^{2p_{A}+1}2^{m-1}\widetilde{E}_{m}^{\left( t\right)
}f_{A}.
\end{equation*}

we have%
\begin{eqnarray}
E\left\vert F_{1}f_{A}\right\vert &\geq &E\left\vert \frac{1}{2^{2p_{A}+1}}%
\sum\limits_{m=p_{A}+2}^{2p_{A}+1}2^{m-1}\widetilde{E}_{m}^{\left( t\right)
}f_{A}\right\vert  \label{f1} \\
&&-\frac{1}{2^{2p_{A}+1}}\sum\limits_{m=1}^{p_{A}+1}2^{m-1}E\left\vert 
\widetilde{E}_{m}^{\left( t\right) }f_{A}\right\vert  \notag \\
&\geq &\sum\limits_{i=1}^{A}\int\limits_{\widetilde{\Delta }_{i}}\left\vert 
\frac{1}{2^{2p_{A}+1}}\sum\limits_{m=p_{A}+1}^{2p_{A}+1}2^{m-1}\widetilde{E}%
_{m}^{\left( t\right) }f_{A}\right\vert d\mu  \notag \\
&&-\frac{1}{2^{2p_{A}+1}}\sum\limits_{m=1}^{p_{A}+1}2^{m-1}m  \notag \\
&\gtrsim &\sum\limits_{i=1}^{A}2^{2i}\mu \left( \widetilde{\Delta }%
_{i}\right) -\frac{p_{A}2^{p_{A}}}{2^{2p_{A}+1}}\gtrsim A.  \notag
\end{eqnarray}

Combining (\ref{f1-f3}), (\ref{f2-3}) and (\ref{f1}) we have%
\begin{equation}
E\left\vert \widetilde{\sigma }_{2^{2p_{A}+1}}^{\left( t\right)
}f_{A}\right\vert \gtrsim A\text{.}  \label{main}
\end{equation}

Let%
\begin{equation}
Q\left( 2^{2A}\right) \geq 2^{2A}\text{, \ \ \ \ }A\geq A_{0}.  \label{Q}
\end{equation}

By virtue of estimate (see (\cite{KR}))%
\begin{equation*}
\left\Vert f_{A}\right\Vert _{Q\left( L\right) }\leq 1+E\left\vert Q\left(
f_{A}\right) \right\vert ,
\end{equation*}%
from (\ref{measure}) and (\ref{Q}) we can write%
\begin{eqnarray*}
E\left\vert \widetilde{\sigma }_{2^{2p_{A}+1}}^{\left( t\right)
}f_{A}\right\vert &\leq &\left\Vert \widetilde{\sigma }_{2^{2p_{A}+1}}^{%
\left( t\right) }\right\Vert _{Q\left( L\right) \rightarrow L_{1}}\left\Vert
f_{A}\right\Vert _{Q\left( L\right) } \\
&\leq &\left\Vert \widetilde{\sigma }_{2^{2p_{A}+1}}^{\left( t\right)
}\right\Vert _{Q\left( L\right) \rightarrow L_{1}}\left( 1+E\left\vert
Q\left( f_{A}\right) \right\vert \right) \\
&=&\left\Vert \widetilde{\sigma }_{2^{2p_{A}+1}}^{\left( t\right)
}\right\Vert _{Q\left( L\right) \rightarrow L_{1}}\left( 1+Q\left(
2^{2A}\right) \mu \left( \Delta _{A}\right) \right) \\
&=&\left\Vert \widetilde{\sigma }_{2^{2p_{A}+1}}^{\left( t\right)
}\right\Vert _{Q\left( L\right) \rightarrow L_{1}}\left( 1+\frac{Q\left(
2^{2A}\right) }{2^{2A}}\right) \\
&\lesssim &\left\Vert \widetilde{\sigma }_{2^{2p_{A}+1}}^{\left( t\right)
}\right\Vert _{Q\left( L\right) \rightarrow L_{1}}\frac{Q\left(
2^{2A}\right) }{2^{2A}},A\geq A_{0}.
\end{eqnarray*}%
Consequently, by (\ref{main}) we have%
\begin{equation}
\left\Vert \widetilde{\sigma }_{2^{2p_{A}+1}}^{\left( t\right) }\right\Vert
_{Q\left( L\right) \rightarrow L_{1}}\gtrsim \frac{A2^{2A}}{Q\left(
2^{2A}\right) },A\geq A_{0}.  \label{low}
\end{equation}

The fact that%
\begin{equation*}
Q\left( L\right) \nsubseteqq L\log L
\end{equation*}%
is equalent to the condition%
\begin{equation*}
\overline{\lim\limits_{u\rightarrow \infty }}\frac{u\log u}{Q\left( u\right) 
}=\infty .
\end{equation*}%
Then there exists $\left\{ u_{k}:k\in \mathbb{P}\right\} $ such that%
\begin{equation*}
\lim\limits_{k\rightarrow \infty }\frac{u_{k}\log u_{k}}{Q\left(
u_{k}\right) }=\infty ,u_{k+1}>u_{k},k=1,2,...
\end{equation*}%
and a monotonically increasing sequence of positive integers $\left\{
A_{k}:k\in \mathbb{P}\right\} $ such that%
\begin{equation*}
2^{2A_{k}}\leq u_{k}<2^{2\left( A_{k}+1\right) }.
\end{equation*}%
Then we have%
\begin{equation*}
\frac{2^{2A_{k}}A_{k}}{Q\left( 2^{2A_{k}}\right) }\gtrsim \frac{u_{k}\log
u_{k}}{Q\left( u_{k}\right) }\rightarrow \infty \text{ \ \ as \ \ }%
k\rightarrow \infty \text{.}
\end{equation*}%
Then, from (\ref{low}) we conclude that%
\begin{equation*}
\sup\limits_{k}\left\Vert \widetilde{\sigma }_{2^{2p_{A_{k}}+1}}^{\left(
t\right) }\right\Vert _{Q\left( L\right) \rightarrow L_{1}}=\infty \text{.}
\end{equation*}%
Theorem \ref{unbound} is proved.
\end{proof}

\end{document}